%%%%%%%% ICML 2023 EXAMPLE LATEX SUBMISSION FILE %%%%%%%%%%%%%%%%%

\documentclass{article}

% Recommended, but optional, packages for figures and better typesetting:
\usepackage{microtype}
\usepackage{graphicx}
\usepackage{subfigure}
\usepackage{booktabs} % for professional tables

% hyperref makes hyperlinks in the resulting PDF.
% If your build breaks (sometimes temporarily if a hyperlink spans a page)
% please comment out the following usepackage line and replace
% \usepackage{icml2023} with \usepackage[nohyperref]{icml2023} above.
\usepackage{hyperref}

% Attempt to make hyperref and algorithmic work together better:

% Use the following line for the initial blind version submitted for review:
% \usepackage{icml2023}

% If accepted, instead use the following line for the camera-ready submission:
% \usepackage[accepted]{icml2023}

% For theorems and such
\usepackage{amsmath}
\usepackage{amssymb}
\usepackage{mathtools}
\usepackage{amsthm}
\usepackage{xcolor}
\usepackage{algorithm}
\usepackage{algorithmic}
% if you use cleveref..
\usepackage[capitalize,noabbrev]{cleveref}

%%%%%%%%%%%%%%%%%%%%%%%%%%%%%%%%
% THEOREMS
%%%%%%%%%%%%%%%%%%%%%%%%%%%%%%%%
\theoremstyle{plain}
\newtheorem{theorem}{Theorem}[section]
\newtheorem{proposition}[theorem]{Proposition}
\newtheorem{lemma}[theorem]{Lemma}

\theoremstyle{definition}
\newtheorem{definition}[theorem]{Definition}

\theoremstyle{remark}

\newtheorem{example}{Example}

% Todonotes is useful during development; simply uncomment the next line
%    and comment out the line below the next line to turn off comments
%\usepackage[disable,textsize=tiny]{todonotes}
\usepackage[textsize=tiny]{todonotes}

% The \icmltitle you define below is probably too long as a header.
% Therefore, a short form for the running title is supplied here:
% \icmltitlerunning{Principled Acceleration of Iterative Numerical Methods Using Machine Learning}
% Meta-Solving for Iterative Solvers: A Gradient-based Approach to Minimize the Number of Iterations

% my commands
\newcommand{\R}{\mathbb {R}}
\newcommand{\N}{\mathbb {N}}
\newcommand{\Z}{\mathbb {Z}}
\newcommand{\T}{\mathcal {T}}
\newcommand{\U}{\mathcal {U}}

\renewcommand{\L}{\mathcal {L}}

\newcommand{\norm}[1]{\left\lVert#1\right\rVert}
\DeclareMathOperator*{\argmin}{argmin}
\DeclareMathOperator*{\argmax}{argmax}
\DeclareMathOperator*{\E}{\mathbb{E}}
% \AtBeginEnvironment{pmatrix}{\setlength{\arraycolsep}{1.75pt}}
\usepackage{bbold}
\usepackage{paralist}
\usepackage[inline]{enumitem}

\usepackage{authblk}
\title{Principled Acceleration of Iterative Numerical Methods Using Machine Learning}%
% \title{Gradient-based meta-solving for minimizing the number of iterations of iterative solvers}%
% \tnotetext[tnote1]{This is an example for title footnote coding.}

\author[1,2]{Sohei Arisaka}
% \ead{e0669551@u.nus.edu}
\author[1]{Qianxiao Li}
% \ead{qianxiao@nus.edu.sg}
\affil[1]{Department of Mathematics, National University of Singapore, 119076, Singapore}
\affil[2]{Kajima Corporation, 1078388, Japan}

\begin{document}
\maketitle

\begin{abstract}
    Iterative methods are ubiquitous in large-scale scientific computing applications,
    and a number of approaches based on meta-learning have been recently proposed to accelerate them.
    However, a systematic study of these approaches and how they differ from meta-learning is lacking.
    In this paper, we propose a framework to analyze such learning-based acceleration approaches,
    where one can immediately identify a departure from classical meta-learning.
    We show that this departure may lead to arbitrary deterioration of model performance.
    Based on our analysis, we introduce a novel training method for learning-based acceleration of iterative methods.
    Furthermore, we theoretically prove that the proposed method improves upon the existing methods,
    and demonstrate its significant advantage and versatility through various numerical applications.
\end{abstract}

\section{Introduction}
It is common and important in science and engineering to solve similar computational problems repeatedly.
For example, in an actual project of structural engineering, the structure design is considered iteratively
to satisfy various constraints, such as safety, cost, and building codes.
This iterative design process often involves a large number of structural simulations \cite{Gallet2022-mz}.
% a large number of computational problems involving complex relationships
% among various physical phenomena are solved every day for ensemble forecasting \cite{Owens2018-xk}.
Another example is systems biology, where an important but challenging problem is
to estimate parameters of mathematical models for biological systems from observation data,
% For another example, in systems biology, it is an important but challenging problem to estimate parameters of mathematical models for biological systems from experimental data,
and solving this inverse problem often requires many numerical simulations \cite{Moles2003-nn, Chou2009-pr}.
% For another example, in material science, a tremendous amount of numerical experiments are conducted
% to discover and characterize new materials \cite{Schmidt2019-st}.
In these situations, we can utilize the data of the previously solved problems to solve the next similar but unseen problems more efficiently,
and machine learning is a natural and effective approach for this.
% \QL{Better to give examples that corresopnd to your numerical examples at the end, at least in spirit.}

% \QL{
Thus, in recent years, many learning-based methods have been proposed for repeated solutions of computational problems.
These ranges from the direct prediction of solutions as a supervised learning problem
\cite{Guo2016-jx,Tang2017-wc, Shan2020-ps, Ozbay2021-qt, Cheng2021-ua, Pfaff2021-uw, Li2021-bl}
to tightly coupling machine learning and traditional scientific computing to take advantage of both
\cite{Ajuria_Illarramendi2020-tb, Um2020-zx, Huang2020-oa, Vaupel2020-zu, Luna2021-ul, Nikolopoulos2022-ag}.
% \cite{Gotz2018-vo, Sappl2019-kq, Ackmann2020-ld,Luz2020-ky, Azulay2022-ee, Cali2022-st, }.
This paper focuses on the acceleration of iterative algorithms by (meta-)learning
\cite{Feliu-Faba2020-yj, Venkataraman2021-zv,Liu2021-vw,Huang2021-ed,Guo2021-bw, Chen2022-tx,  Psaros2022-ym},
which belongs to the second class.
% Meta-learning, or learning to learn, leverages previous learning experiences to improve future learning performance \cite{Hospedales2021-ib},
% which fits the motivation of utilizing the data from previously solved equations for the next one.
For instance, \cite{Chen2022-tx} uses meta-learning to generate smoothers of the Multi-grid Network for parametrized PDEs,
and \cite{Guo2021-bw} proposes a meta-learning approach to learn effective solvers based on the Runge-Kutta method for ordinary differential equations.
In \cite{Liu2021-vw, Huang2021-ed, Psaros2022-ym}, meta-learning is used to accelerate the training of physics-informed neural networks for solving PDEs.

However, to date there lacks a systematic scrutiny of the overall approach.
For example, does directly translating a meta-learning algorithm,
such as gradient-based meta-learning \cite{Hospedales2021-ib}, necessarily lead to acceleration in iterative methods?
In this work, we perform a systematic framework to study this problem,
where we find that there is indeed a mismatch between currently proposed training methods
and desirable outcomes for scientific computing.
Using numerical examples and theoretical analysis,
we show that minimizing the solution error does not necessarily speed up the computation.
Based on our analysis, we propose a novel and principled training approach for learning-based acceleration of iterative methods
along with a practical loss function that enables gradient-based learning algorithms.
Our main contributions can be summarized as follows:
\begin{compactenum}
    \item In \cref{sec:current approach}, we propose a general framework, called gradient-based meta-solving, to analyze and develop learning-based numerical methods.
    Using the framework, we reveal the mismatch between the training and testing of learning-based iterative methods in the literature.
    We show numerically and theoretically that the mismatch actually causes a problem.
    \item In \cref{sec:our approach}, we propose a novel training approach to directly minimize the number of iterations
    along with a differentiable loss function for this.
    Furthermore, we theoretically show that our approach can perform arbitrarily better than the current one and numerically confirm the advantage.
    % and resolve it for iterative methods by a novel training methodology to directly minimize the number of iterations.
    % \item We investigate the property of the proposed method and prove that it performs arbitrary better than the conventional methods
    % for some problems in terms of the number of iterations.
    \item In \cref{sec:applications}, we demonstrate the significant performance improvement and versatility of the proposed method through numerical examples,
    including nonlinear differential equations and nonstationary iterative methods.
    % \QL{Combine 1 and 2, Combine 3 and 4. So that you are only left with 3 contributions.}
\end{compactenum}

% which we call gradient-based meta-solving (GBMS),
% that encompasses both learning and computational problems.
% This offers a general means to understand and develop learning-based algorithms to speed up computation.
% As an illustration of our approach, we apply GBMS to accelerate the solution to ODEs and PDEs with iterative solvers
% by generating parameters of the solvers adaptively for each problem instance.
% We show the advantage of the proposed algorithm over the baseline classical and learning-based approaches
% through theoretical analysis and numerical experiments.
% Finally, we incorporate the algorithm into practical applications, including incompressible flow simulations and parameter estimation of chemical reactions,
% and demonstrate its versatility and performance.

% \section{Gradient-based meta-solving}

% \section{Related work}
% \QL{
%     Plan this part.
% }

% \section{GBMS for iterative solvers}
\section{Problem Formulation and Analysis of Current Approaches}
\label{sec:current approach}
Iterative solvers are powerful tools to solve computational problems.
For example, the Jacobi method and SOR (Successive Over Relaxation) method are used to solve PDEs \cite{Saad2003-vm}.
In iterative methods, an iterative function is iteratively applied to the current approximate solution
to update it closer to the true solution until it reaches a criterion,
such as a certain number of iterations or error tolerance.
These solvers have parameters, such as initial guesses and relaxation factors,
and solver performance is highly affected by the parameters.
However, the appropriate parameters depend on problems and solver configurations,
and in practice, it is difficult to know them before solving problems.

In order to overcome this difficulty, many learning-based iterative solvers have been proposed in the literature
\cite{Hsieh2018-ey, Um2020-zx,Stanziola2021-ah, Chen2022-tx,Kaneda2022-pm, Azulay2022-ee,Nikolopoulos2022-ag}.
However, there lacks a unified perspective to organize and understand them.
Thus, in \cref{sec:general formulation}, we first introduce a general and systematic framework for analysing and developing learning-based numerical methods.
Using this framework, we identify a problem in the current approach
and study how it degrades performance in \cref{sec:min error}.
% and resolve it by a novel approach to directly minimize the number of iterations in \cref{sec:min num}.
% Finally, in \cref{sec:surrogate}, we introduce a practical loss function to train learning-based numerical methods using the proposed approach.
% \QL{this part needs to be updated.}

\subsection{General Formulation of Meta-solving}
\label{sec:general formulation}
% \QL{Simplify the below to a minimal version that is required to present this paper. Optionally have an appendix that has the full general version.}
Let us first introduce a general framework, called meta-solving, to analyze learning-based numerical methods in a unified way.
We fix the required notations.
A task $\tau$ is any computational problem of interest.
% which is described by data related to the problem.
% Mathematically, it is a tuple $\tau = (D_{\tau}, \mathcal U_\tau)$
% consisting of a dataset $D_{\tau}$ and a 
% Solution space $\U$ in which we find a solution.
Meta-solving considers the solution of not one but a distribution of tasks, % by a learnable solver.
so we consider a task space $(\T, P)$ as a probability space that consists of a set of tasks $\T$ and a task distribution $P$.
% which is a probability measure defined on a suitable $\sigma$-algebra on $\T$.
A loss function $\L:\T \times \U \to \R \cup \{\infty\}$ is a function to measure how well the task $\tau$ is solved,
where $\U$ is the solution space in which we find a solution.
% A dataset $D_\tau$ is a set.
% which is a set of parametric candidate solutions.
% , which is usually a subset of $\R^N$ for some $N \in \N$.
% For example, $L_\tau$ can be an error ofin \cref{sec:relaxation}, $L_\tau$ measures the computational efficiency of the task solution.
To solve a task $\tau$ means to find an approximate solution $\hat u \in \U$ which minimizes $\L(\tau, \hat u)$.
A solver $\Phi$ is a function from $\T \times \Theta$ to $\U$.
$\theta \in \Theta$ is the parameter of $\Phi$, and $\Theta$ is its parameter space.
Here, $\theta$ may or may not be trainable, depending on the problem.
Then, solving a task $\tau \in \T$ by a solver $\Phi$ with a parameter $\theta$ is denoted by $\Phi(\tau; \theta)= \hat u$.
A meta-solver $\Psi$ is a function from $\T \times \Omega$ to $\Theta$, where $\omega \in \Omega$ is a parameter of $\Psi$ and $\Omega$ is its parameter space.
A meta-solver $\Psi$ parametrized by $\omega \in \Omega$ is expected to generate an appropriate parameter $\theta_\tau \in \Theta$
for solving a task $\tau \in \T$ with a solver $\Phi$, which is denoted by $\Psi(\tau; \omega) = \theta_{\tau}$.
When $\Phi$ and $\Psi$ are clear from the context, we write $\Phi(\tau; \Psi(\tau; \omega))$ as $\hat u (\omega)$
and $\L(\tau, \Phi(\tau; \Psi(\tau; \omega)))$ as $\L(\tau; \omega)$ for simplicity.
Then, by using the notations above, our meta-solving problem is defined as follows:
\begin{definition}[Meta-solving problem]
    For a given task space $(\T, P)$, loss function $\L$, solver $\Phi$, and meta-solver $\Psi$,
    find $\omega \in \Omega$ which minimizes $\E_{\tau \sim P} \left[\L (\tau;\omega) \right]$.
    % find $\omega \in \Omega$ which minimizes $\E_{\tau \sim P} \left[L_{\tau} (\tau, \Phi, \Psi, \omega) \right]$.
\end{definition}

If $\L$, $\Phi$, and $\Psi$ are differentiable, then gradient-based optimization algorithms, such as SGD and Adam \cite{Kingma2015-ys},
can be used to solve the meta-solving problem.
We call this approach gradient-based meta-solving (GBMS)
as a generalization of gradient-based meta-learning as represented by MAML \cite{Finn2017-qg}.
As a typical example of the meta-solving problem, we consider learning how to choose good initial guesses of iterative solvers
\cite{Ajuria_Illarramendi2020-tb,Vaupel2020-zu,Um2020-zx, Ozbay2021-qt,Luna2021-ul}.
Other examples are presented in \cref{app:examples}.
\begin{example}[Generating initial guesses]
    \label{ex:initial guess}
    Suppose that we need to repeatedly solve similar instances of a class of differential equations with a given iterative solver.
    The iterative solver requires an initial guess for each problem instance, which sensitively affects the accuracy and efficiency of the solution.
    Here, finding a strategy to optimally select an initial guess depending on each problem instance can be viewed as a meta-solving problem.
    For example, let us consider repeatedly solving 1D Poisson equations under different source terms.
    The task $\tau$ is to solve the 1D Poisson equation
    $- \frac{d^2}{dx^2} u(x)  =f(x)$ with Dirichlet boundary condition $u(0) = u(1)=0$.
    By discretizing it with the finite difference scheme,
    we obtain the linear system $Au=f$,
    where $A \in \R^{N\times N}$ and $u,f \in \R^N$. %, where $A$ is the one in \cref{eq:matrix} with $\alpha=2$ and $\beta=1$.
    Thus, our task $\tau$ is represented by $\tau = \{f_\tau\}$,
    and the task distribution $P$ is the distribution of $f_\tau$.
    % \begin{align}
    %     - \Delta u = f_\tau \quad &\text{in } \mathcal D \\
    %     u = g_\tau \quad &\text{on } \partial \mathcal D
    % \end{align}
    % with different force terms $f_\tau$ and Dirichlet boundary conditions $g_\tau$.
    % The dataset $D_{\tau}$ contains data of the differential equation, $D_\tau = \{ f_\tau, g_\tau \}$.
    % The solution parameter space $\U_\tau$ is $\R^{N_\tau}$.
    % , where $N_\tau \in \N$ represents a required resolution (e.g. number of discretization points) of the approximated solution $\hat u \in \U_\tau$.
    The loss function $\L:\T \times \U \to \R_{\geq 0}$ measures the accuracy of approximate solution $\hat u \in \U = \R^N$.
    For example, $\L(\tau, \hat u) = \norm{A\hat u - f_\tau}^2$ is a possible choice.
    If we have a reference solution $u_\tau$, then $\L(\tau, \hat u) = \norm{\hat u - u_\tau}^2$ can be another choice.
    % and $g_\tau$.
    %For example, $f$ is a single point source with a uniformly random location $\bar x \in \mathcal D$, i.e. $f(\bar x)=1$ and $f(x) =0$ for $x\neq \bar x$, and $g$ is a uniformly random constant in $[-1, 1]$.
    %Then, $P(\tau) = \mathrm{Unif}_{\mathcal D}(\bar x_\tau) \mathrm{Unif}_{[-1, 1]}(g_\tau)$.
    The solver $\Phi: \T \times \Theta \to \U$ is an iterative solver with an initial guess $\theta \in \Theta$ for the Poisson equation.
    For example, suppose that $\Phi$ is the Jacobi method. % \cite{Saad2003-vm}. % and $\theta$ is its initial guess.
    The meta-solver $\Psi:\T \times \Omega \to \Theta$ is a strategy characterized by $\omega \in \Omega$
    to select the initial guess $\theta_\tau \in \Theta$ for each task $\tau \in \T$.
    For example, $\Psi$ is a neural network with weight $\omega$.
    % Note that the output of the meta-solver depends on $\tau$, which is different from the case of \cref{ex:few}.
    Then, finding the strategy to select initial guesses of the iterative solver becomes a meta-solving problem.

\end{example}

Besides scientific computing applications, the meta-solving framework also includes the classical meta-learning problems, such as few-shot learning with MAML,
where $\tau$ is a learning problem, $\Phi$ is one or few steps gradient descent for a neural network starting at initialization $\theta$,
$\Psi$ is a constant function returning its weights $\omega$, and $\omega=\theta$ is optimized to be easily fine-tuned for new tasks.
We remark two key differences between \cref{ex:initial guess} and the example of MAML.
First, in \cref{ex:initial guess}, the initial guess $\theta_\tau$ is selected for each task $\tau$,
while in MAML, the initialization $\theta$ is common to all tasks.
Second, in \cref{ex:initial guess}, the residual $\L(\tau, \hat u) = \norm{A\hat u - f_\tau}^2$
can be used as an oracle to measure the quality of the solution even during testing,
and it tells us when to stop the iterative solver $\Phi$ in practice.
By contrast, in the MAML example, we cannot use such an oracle at the time of testing,
and we usually stop the gradient descent $\Phi$ at the same number of steps as during training.

% Here, the second difference reflects an unique nature of iterative solvers in scientific applications.
% In most scientific applications, a required tolerance is set in advance,
% and iterative methods are assessed by the computational cost, in particular, the number of iterations,
% to achieve the tolerance \cite{Saad2003-vm}.
% This fact suggests a

Here, the second difference gives rise to the question about the choice of $\L$,
which should be an indicator of how well the iterative solver $\Phi$ solves the task $\tau$.
However, if we iterate $\Phi$ until the solution error reaches a given tolerance $\epsilon$,
the error finally becomes $\epsilon$ and cannot be an indicator of solver performance.
Instead, how fast the solver $\Phi$ finds a solution $\hat u$ satisfying the tolerance $\epsilon$ should be used as the performance indicator in this case.
In fact, in most scientific applications, a required tolerance is set in advance,
and iterative methods are assessed by the computational cost, in particular, the number of iterations,
to achieve the tolerance \cite{Saad2003-vm}.
Thus, the principled choice of loss function should be the number of iterations to reach a given tolerance $\epsilon$,
which we refer $\L_\epsilon$ through this paper.

\subsection{Analysis of the Approach of Solution Error Minimization}
\label{sec:min error}
Although $\L_\epsilon$ is the true loss function to be minimized,
minimizing solution errors is the current main approach in the literature for learning parameters of iterative solvers.
For example, in \cite{Um2020-zx}, a neural network is used to generate an initial guess for the Conjugate Gradient (CG) solver.
It is trained to minimize the solution error after a fixed number of CG iterations
but evaluated by the number of CG iterations to reach a given tolerance.
Similarly, \cite{Chen2022-tx} uses a neural network to generate the smoother of PDE-MgNet, a neural network representing the multigrid method.
It is trained to minimize the solution error after one step of PDE-MgNet
but evaluated by the number of steps of PDE-MgNet to reach a given tolerance.

These works can be interpreted that the solution error after $m$ iterations, which we refer $\L_m$, is used as a surrogate of $\L_\epsilon$.
Using the meta-solving framework, it can be understood that
the current approach is trying to train meta-solver $\Psi$ by applying gradient-based learning algorithms to the emprical version of
\begin{equation}
    \label{eq:min res}
    \min_\omega \E_{\tau \sim P}[ \L_m(\tau, \Phi_m(\tau; \Psi(\tau; \omega))) ]
\end{equation}
as a surrogate of
\begin{equation}
    \label{eq:min num}
    \min_\omega \E_{\tau \sim P}[ \L_\epsilon(\tau, \Phi_\epsilon(\tau; \Psi(\tau; \omega))) ],
\end{equation}
where $\Phi_m$ is an iterative solver whose stopping criterion is the maximum number of iterations $m$,
and $\Phi_\epsilon$ is the same kind of iterative solver but has a different stopping criterion, error tolerance $\epsilon$.

% where $\Phi_m$ is an iterative solver whose stopping criterion is the maximum number of iterations $m$,
% and $\L_m$ is a loss function measuring the accuracy of the $m$-th step solution $\hat u^{(m)} = \Phi_m(\tau; \Psi(\tau; \omega))$.
% For example, in \cite{Um2020-zx} studies the inference of initial guesses for the Conjugate Gradient (CG) solver using a neural network.
% During traing, the solver $\Phi$ is the CG solver,
% and the meta-solver $\Psi$ is the neural network that outputs the initial guess of the CG solver.

% \QL{Reference 1-2 examples that is basically doing this.}
% We note that the current approach (\ref{eq:min res}) can be considered as a direct generalization of MAML,
% where $\tau$ is a learning problem, $\Phi_m$ is $m$-steps gradient descent for a neural network,
% $\Psi$ is a constant function returning its weights, and $\mathcal U = \Theta = \Omega$ is its weights space.
% \QL{Ref appendix for direct explanation.}

% \QL{
The key question is now, is minimizing solution error $\L_m$ sufficient to minimize,
at least approximately, the number of iterations $\L_\epsilon$?
In other words, is (\ref{eq:min res}) a valid surrogate for (\ref{eq:min num})?
We hereafter show that the answer is negative.
In fact, $\L_\epsilon$ can be arbitrarily large even if $\L_m$ is minimized,
especially when the task difficulty (i.e. number of iterations required to achieve a fixed accuracy)
varies widely between instances.
This hightlights a significant departure from classical meta-learning,
and must be taken into account in algorithm design for scientific computing.
Let us first illustrate this phenomena using a numerical example
and its concrete theoretical analysis.
A resolution of this problem leading to our proposed method will be introduced in \cref{sec:our approach}.

\subsubsection{A Counter-example: Poisson Equation}
\label{sec:counter-example}
% The mismatch between the training and testing can degrade the performance of learning-based iterative methods even for a simple problem, Poisson equation.

% \paragraph{Problem setting}

% \QL{
%     \begin{itemize}
%         \item Solve a class of Poisson + BC, with varying RHS
%         \item We solve it with Jacobi, and meta-solver guesses initial condition
%               to accelerate (REF earlier work doing this)
%         \item Introduce hard and easy classes -> task distribution
%         \item Introduce the training conditions (AKA what is done in current literature (REF))
%               and testing conditions (as introduced in ...)
%         \item PsiNN is current approach. Psibaseline is some naive Baseline.
%     \end{itemize}
% }

Let us recall \cref{ex:initial guess} and apply the current approach to it.
% We consider solving the 1D Poisson equation
% $- \frac{d^2}{dx^2} u(x)  =f(x)$ with Dirichlet boundary condition $u(0) = u(1)=0$.
% \begin{equation}
%     \label{1d poisson}
%     \begin{aligned}
%          & - \frac{d^2}{dx^2} u(x)  =f(x), \quad x \in(0,1) \\
%          & u(0)        =            u(1)=0.
%     \end{aligned}
% \end{equation}
% By discretizing it with the finite difference scheme,
% we obtain the linear system $Au=f$,
% where $A \in \R^{N\times N}$ and $u,f \in \R^N$. %, where $A$ is the one in \cref{eq:matrix} with $\alpha=2$ and $\beta=1$.
% Let us consider task $\tau = (\{u_\tau, f_\tau\}, \U)$,
% \QL{$\tau$ should only contain $f_\tau$}
% where $u_\tau$ and $f_\tau$ satisfy $Au_\tau = f_\tau$, and $\U = \R^N$.
The task $\tau = \{f_\tau\}$ is to solve the discretized 1D Poisson equation $Au=f_\tau$.
During training, we use the Jacobi method $\Phi_{m}$,
% \QL{talk about Jacobi before easy/hard. Also, don't assume people know what it is.}
which starts with the initial guess $\theta = \hat u^{(0)}$ and iterates $m$ times to obtain the approximate solution $\hat u^{(m)} \in \U$.
However, during testing, we use the Jacobi method $\Phi_{\epsilon}$ whose stopping criterion is tolerance $\epsilon$,
which we set $\epsilon = 10^{-6}$ in this example.
We consider two task distributions, $P_1$ and $P_2$, and their mixture $P=pP_1+(1-p)P_2$ with weight $p \in [0, 1]$.
$P_1$ and $P_2$ are designed to generate difficult (i.e. requiring a large number of iterations to solve) tasks and easy ones respectively (\cref{app:counter-example}).
% The specific design of $P_1$ and $P_2$ is given in \cref{app:counter-example}.
% \QL{either REF appendix or (better) say it clearly}
% Then, we use their mixture $P=pP_1+(1-p)P_2$ with weight $p \in [0, 1]$ as our task distribution.
% whose stopping criterion is the maximum number of iterations $m$.
To generate the initial guess $\hat u^{(0)}$, two meta-solvers $\Psi_{\text{nn}}$ and $\Psi_{\text{base}}$ are considered. %to generate initial guesses for $\Phi_{\delta, m}$.
$\Psi_{\text{nn}}$ is a fully-connected neural network with weights $\omega$,
which takes $f_\tau$ as inputs and generates $\hat u_\tau^{(0)}$ depending on each task $\tau$.
$\Psi_{\text{base}}$ is a constant baseline, which gives the constant initial guess $\hat u^{(0)} = \mathbf{0}$ for all tasks.
Note that $\Psi_{\text{base}}$ is already a good choice, because $u_\tau(0)=u_\tau(1)=0$ and $\E_{\tau \sim P}[u_\tau] = \mathbf 0$.
The loss function is the relative error after $m$ iterations, $\L_m = \norm{\hat u^{(m)} - A^{-1}f_\tau}^2/\norm{A^{-1}f_\tau}^2$.
Then, we solve (\ref{eq:min res}) by GBMS as a surrogate of (\ref{eq:min num}).
We note that the case $m=0$, where $\Phi_0$ is the indentity function that returns the initial geuss,
corresponds to using the solution predicted by ordinary supervised learning as the initial guess \cite{Ajuria_Illarramendi2020-tb, Ozbay2021-qt}.
The case $m>0$ is also studied in \cite{Um2020-zx}.
Other details, including the neural network architecture and hyper-parameters for training, are presented in \cref{app:counter-example}.
% \begin{equation}
%     \label{eq:min res}
%     \min_\omega \E_{\tau \sim P}[ \L_m(\tau, \Phi_m(\tau; \Psi_{\text{nn}}(\tau; \omega))) ].
% \end{equation}
% This formulation to minimize solution errors is the conventional approach employed in the literature \cite{}.
% Since the loss function $\L_m$, the solver $\Phi_m$, and the meta-solver $\Psi_{\text{nn}}$ are all differentiable,
% (\ref{eq:min res}) can be solved by a gradient-based algorithm using backpropagation,
% which we call gradient-based meta-solving (GBMS, \cref{alg:general}).
% We note that the conventional approach (\ref{eq:min res}) can be regarded
% as a straightforward generalization of gradient-based meta-learning as represented by MAML \cite{Finn2017-qg},
% where $\tau$ is a learning problem, $\Phi_m$ is $m$-steps gradient descent for a neural network,
% $\Psi$ is a constant function returning its weights, and $\mathcal U = \Theta = \Omega$ is its weights space.

% \todo{add more exlpanation}

% we train the meta-solver $\Psi_{\text{nn}}$ by GBMS with two formulations,
% (\ref{eq:min res}) for the cases of $\delta=0$ and (\ref{eq:surrogate}) for the cases of $m=\infty$ as a surrogate of (\ref{eq:min num}).
% The performnce of the trained meta-solvers is evaluated by the number of iterations to achieve a target tolerance $\epsilon$.

The trained meta-solvers are assessed by $\L_\epsilon$, the number of iterations to achieve the target tolerance $\epsilon = 10^{-6}$,
which is written by $\L_\epsilon(\tau; \omega) = \min \{m \in \Z_{\geq 0}: \sqrt{L_m(\tau; \omega)} \leq \epsilon \}$ as a function of $\tau$ and $\omega$.
% The results are presented in \cref{fig:performance} and \cref{fig:distribution comparison}.
% and the details are presented in \cref{app:results}.
% In \cref{fig:performance} and \cref{fig:distribution comparison},
\cref{fig:performance} compares the performance of the meta-solvers trained with different $m$,
and it indicates the failure of the current approach (\ref{eq:min res}).
% solvers and corresponding loss functions,
% and from the comparison, we can see the advantage of our formulation (\ref{eq:surrogate}) over the conventional one (\ref{eq:min res}).
For the distribution of $p=0$, where all tasks are sampled from the easy distribution $P_2$ and have similar difficulty,
% \QL{meaning all easy instances}
$\Psi_{\text{nn}}$ successfully reduces the number of iterations by 76\% compared to the baseline $\Psi_{\text{base}}$ by choosing a good hyper-parameter $m=25$.
As for the hyper-paramter $m$, we note that larger $m$ does not necessarily lead to better performance,
which will be shown in \cref{thm:opposite},
and the best training iteration number $m$ is difficult to guess in advance.
% so we have picked the best one by a grid search.
% \QL{emphasize that you have tried to pick the best training iteration number.}
% ours performs better without the hyper-parameter tuning.
For the distribution of $p=0.01$,
where tasks are sampled from the difficult distribution $P_1$ with probability $0.01$
and the easy distribution $P_2$ with probability $0.99$,
the performance of $\Psi_{\text{nn}}$ degrades significantly.
In this case, the reduction is only 26\% compared to the baseline $\Psi_{\text{base}}$ even for the tuned hyper-paramter.
% The advantage is more significant for the case of $p=0.01$.
% In this case, the conventional method performs poorly even with the hyper-parameter tuning,
% while ours keeps its performance and reduces the number of iterations
% by 78\% compared to the constant baseline $\Psi_{\text{base}}$ and 70\% compared to the best-tuned conventional method ($m=25$).
% This is the case implied in \cref{thm:advantage}.
\cref{fig:distribution comparison} illustrates the cause of this degradation.
In the case of $p=0$, $\L_m$ takes similar values for all tasks,
and minimizing $\E_{\tau \sim P}[\L_m]$ can reduce the number of iterations for all tasks uniformly.
% Thus, the meta-solver can reduce the number of iterations for all tasks uniformly because all tasks have similar solution errors
% the conventional method and ours result in similar distriutions of the number of iterations.
However, in the case of $p=0.01$, %tasks are sampled from the difficult distribution $P_1$ with probability $0.01$ and the easy distribution $P_2$ with probability $0.99$.
$\L_m$ takes large values for the difficult tasks and small values for the easy tasks,
and $\E_{\tau \sim P}[\L_m]$ is dominated by a small number of difficult tasks.
Consequently, the trained meta-solver with the loss $\L_m$ reduces the number of iterations for the difficult tasks
but increases it for easy tasks consisting of the majority (\cref{fig:001_m25}).
In other words, $\L_m$ is sensitive to difficult outliers, which degrades the performance of the meta-solver.

\begin{figure}[hbtp]
    % \vskip -0.1in
    \centering
    \includegraphics[width=0.6\textwidth]{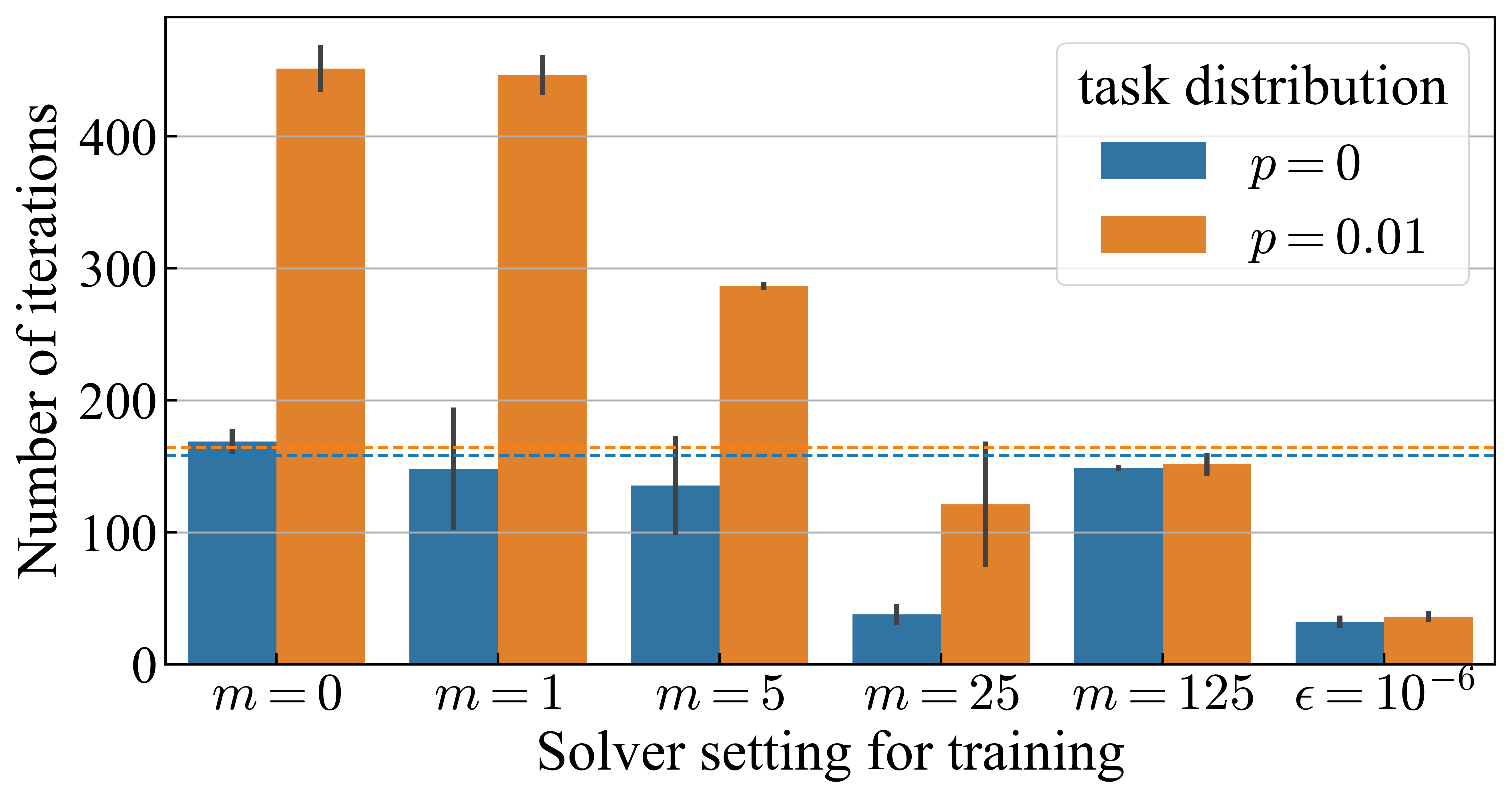} \\ [-2ex]
    \caption{The number of iterations of Jacobi method.
        The two dotted lines indicate the baseline performance of $\Psi_{\text{base}}$ for each task distribution of the corresponding color.
        The error bar indicates the standard deviation.
        Specific numbers are presented in \cref{tab:poisson}.}
    \label{fig:performance}
    \vskip -0.1in
\end{figure}

% \begin{figure}[hbtp]
%     \centering
%     \subfigure[$p=0$ and $\Psi_{\text{base}}$]{
%         \includegraphics[width=0.47\linewidth]{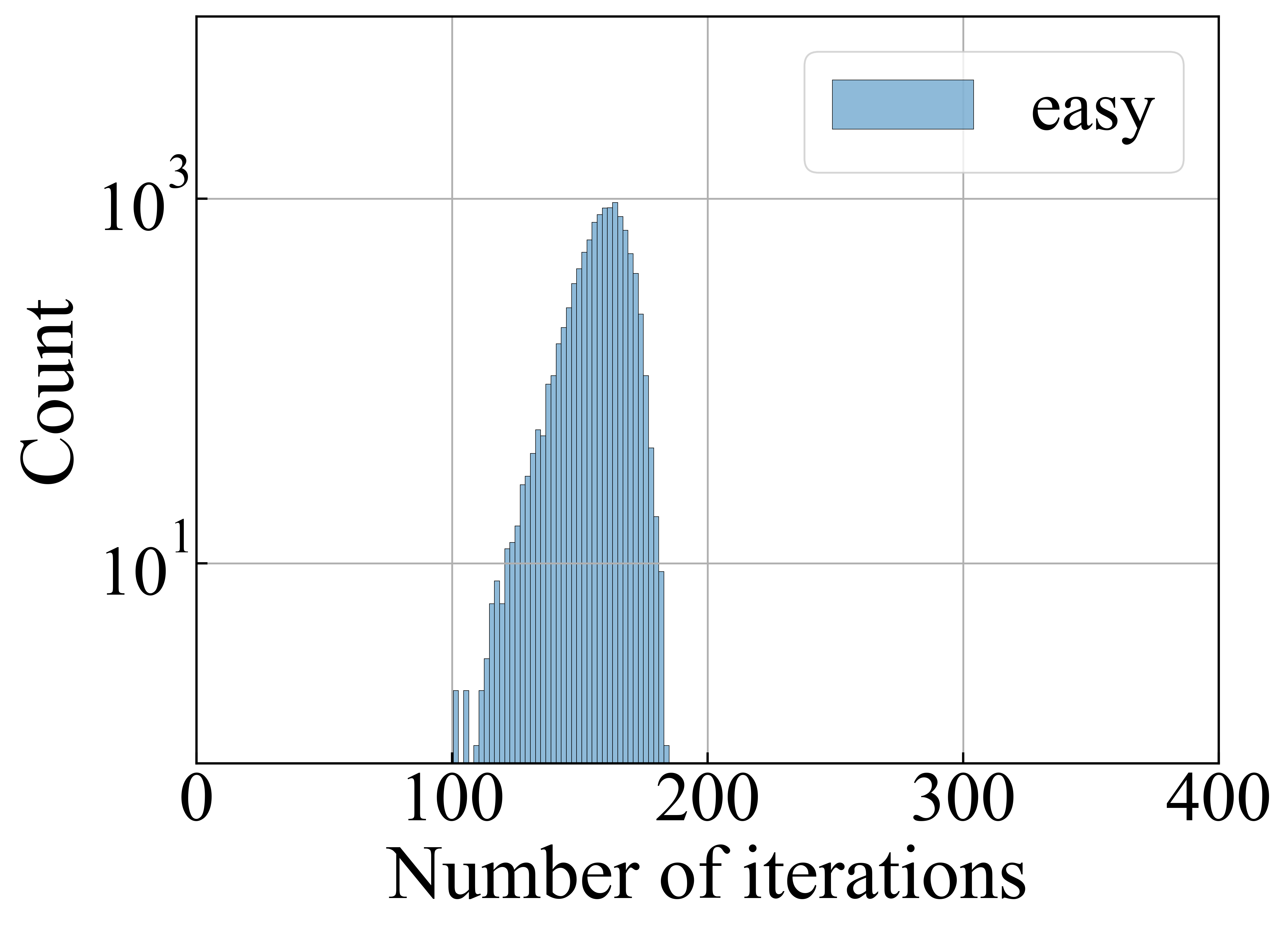}
%     }
%     \subfigure[$p=0.01$ and $\Psi_{\text{base}}$]{
%         \includegraphics[width=0.47\linewidth]{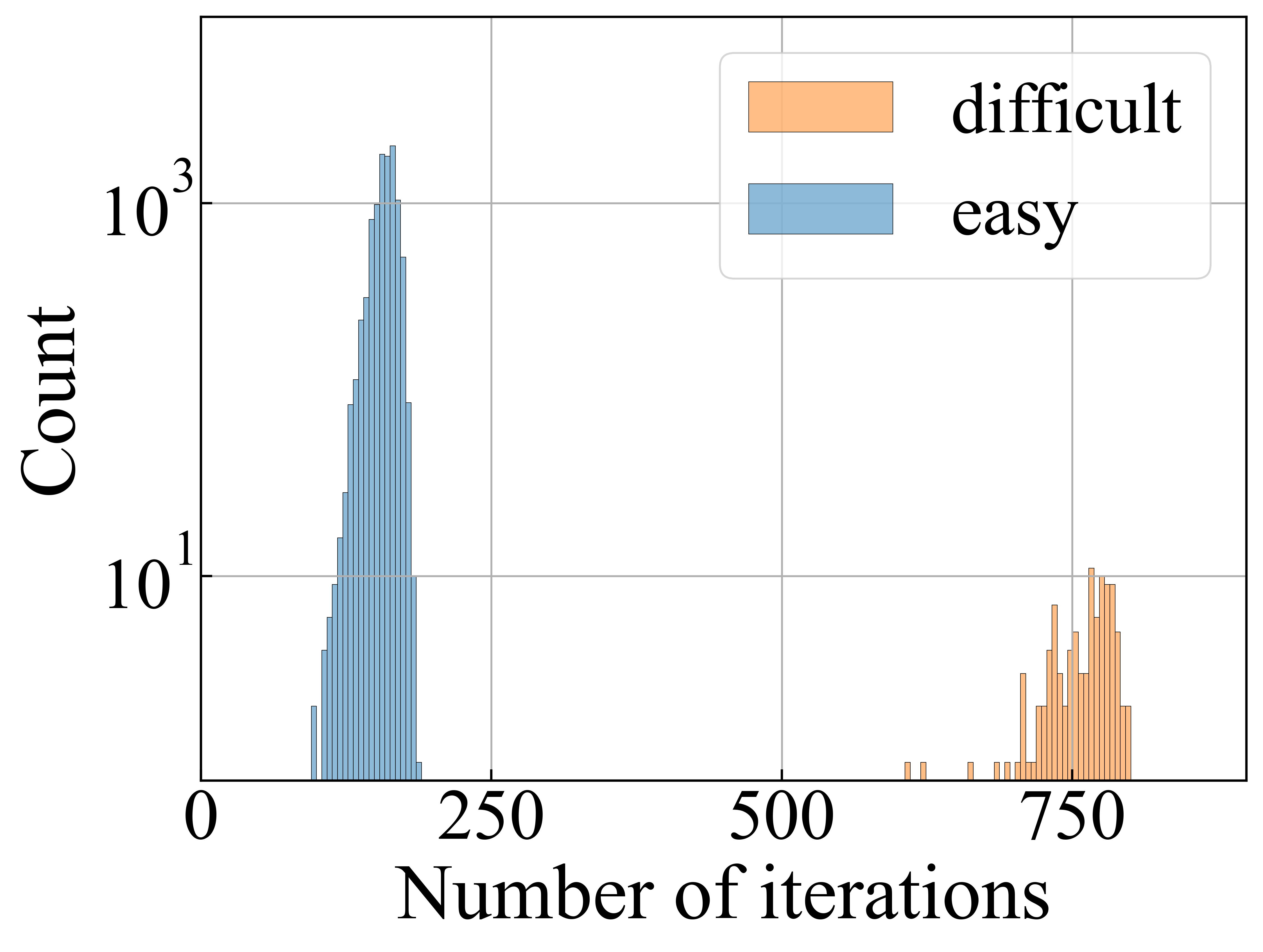}
%     } \\ [-2ex]
%     \subfigure[$p=0$ and $\Psi_{\text{nn}}$ with $\Phi_{0, 25}$]{
%         \includegraphics[width=0.47\linewidth]{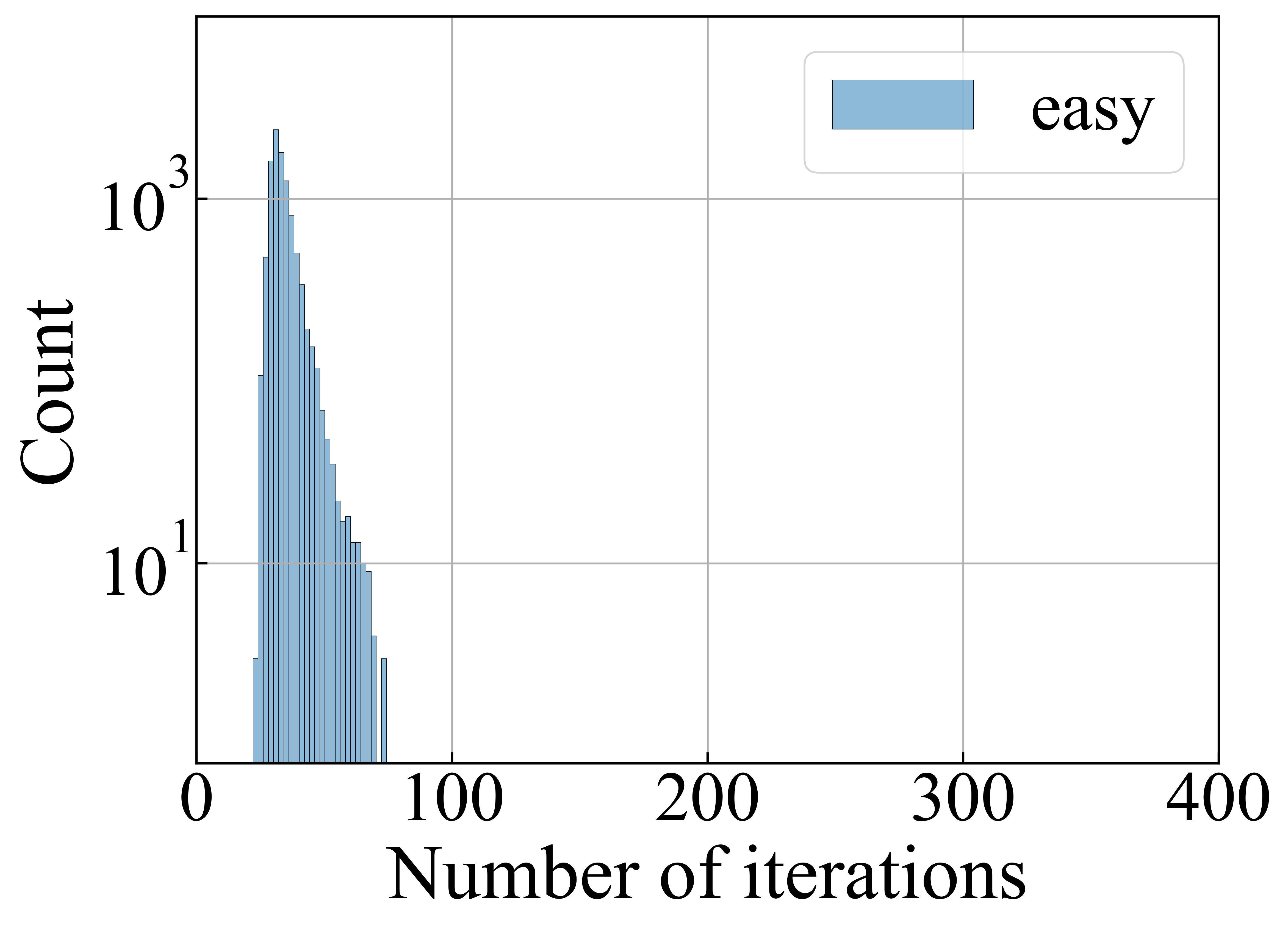}
%     }
%     \subfigure[$p=0.01$ and $\Psi_{\text{nn}}$ with $\Phi_{0, 25}$]{
%         \includegraphics[width=0.47\linewidth]{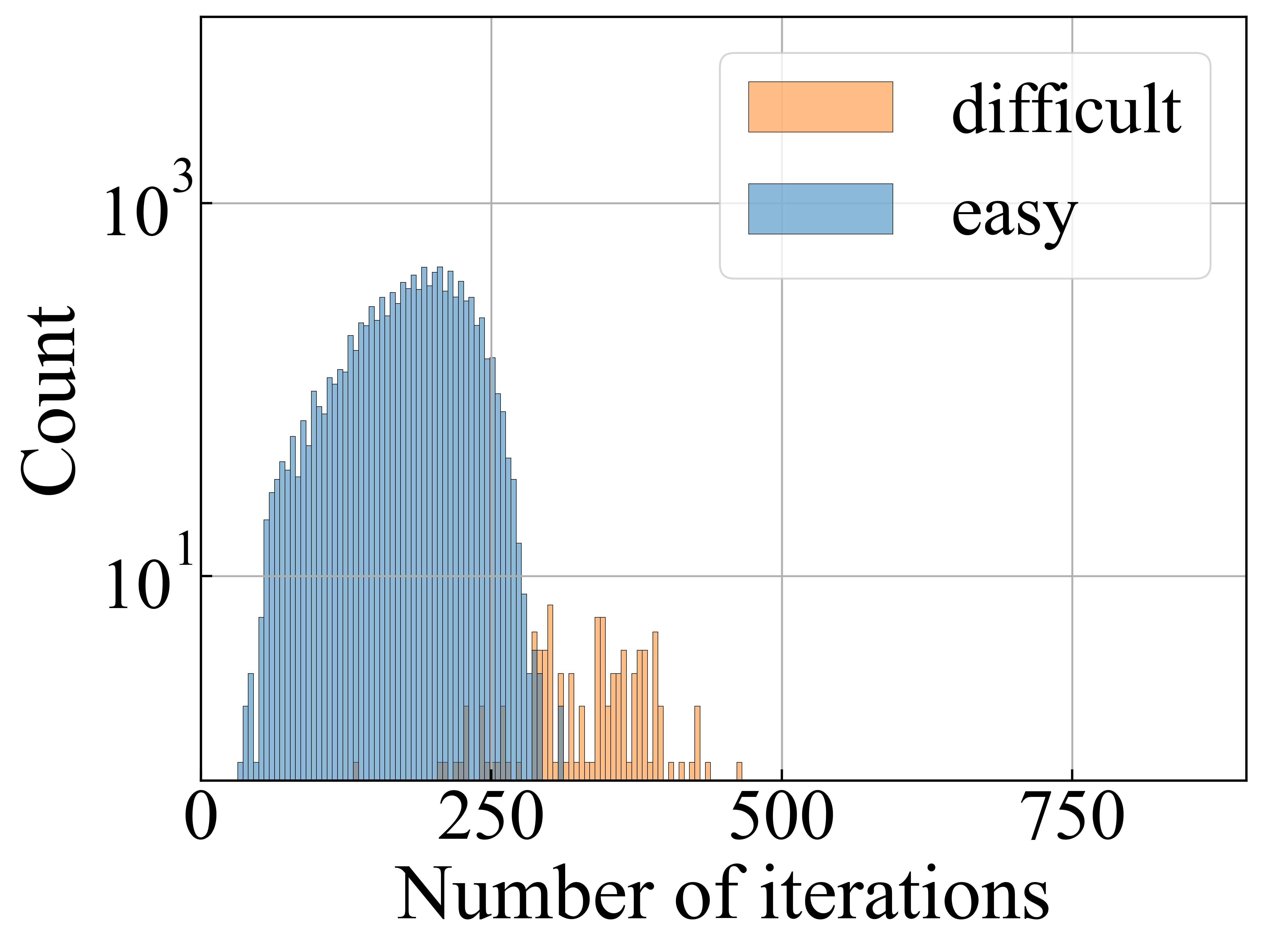}
%     } \\ [-2ex]
%     \subfigure[$p=0$ and $\Psi_{\text{nn}}$ with $\Phi_{10^{-6}, \infty}$]{
%         \includegraphics[width=0.47\linewidth]{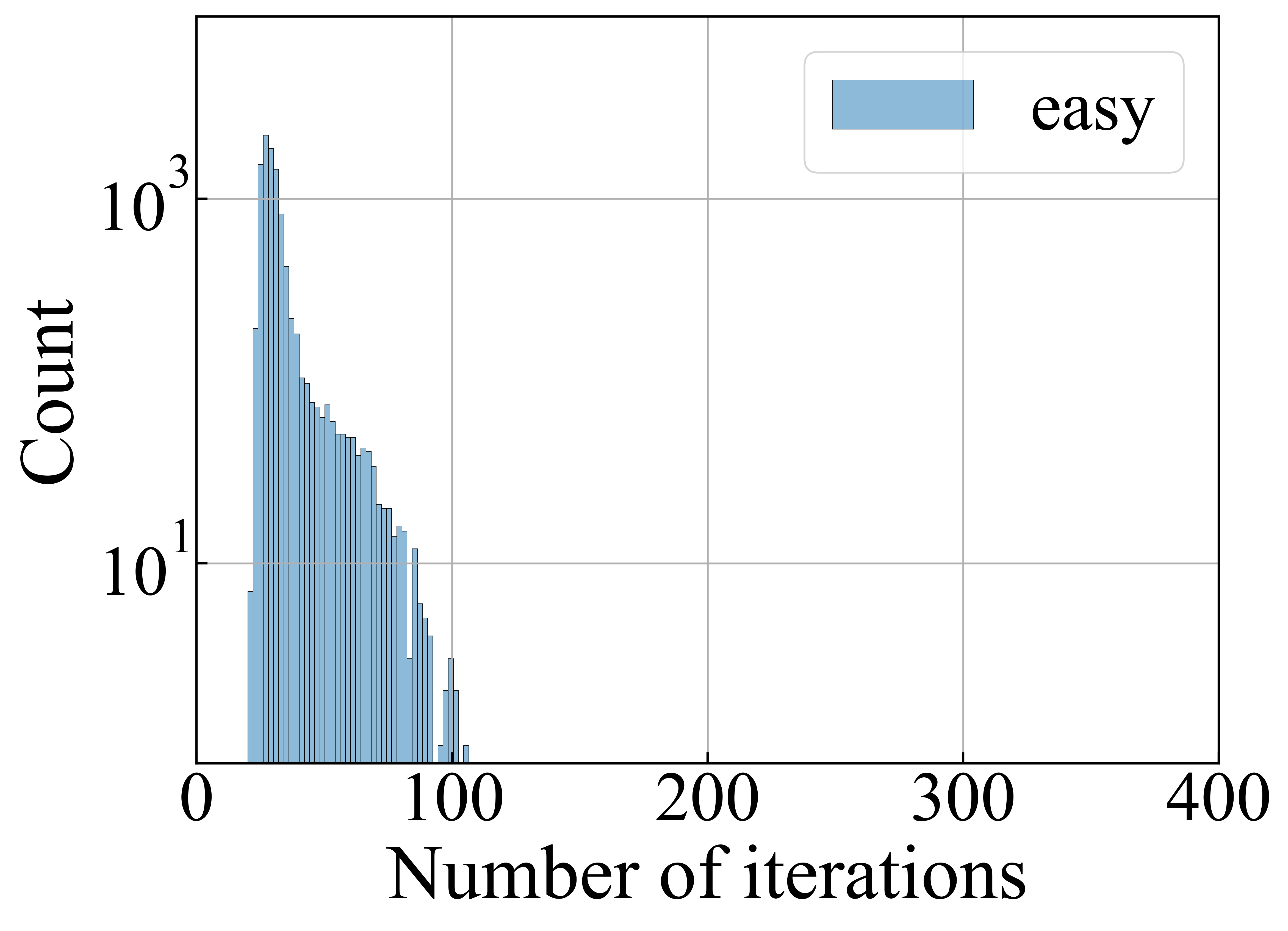}
%     }
%     \subfigure[$p=0.01$ and $\Psi_{\text{nn}}$ with $\Phi_{10^{-6}, \infty}$]{
%         \includegraphics[width=0.47\linewidth]{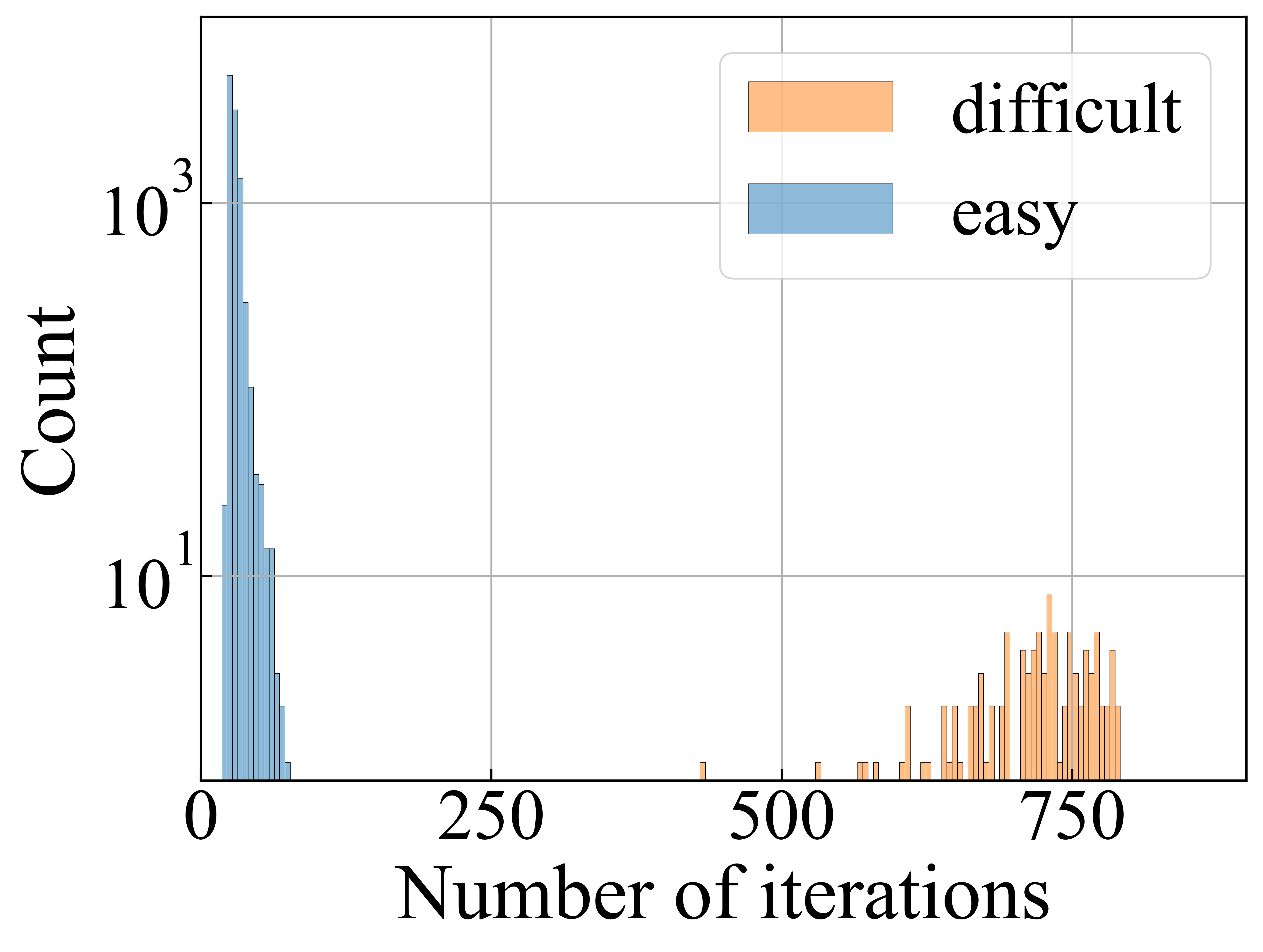}
%     }
%     \caption{Comparison of distributions of the number of iterations for the tolerance $\epsilon = 10^{-6}$.}
%     \label{fig:distribution comparison}
% \end{figure}

\begin{figure*}


    \centering
    \subfigure[\footnotesize $p=0$ and $\Psi_{\text{base}}$]{
        \includegraphics[width=0.4\linewidth]{figures/000_base_big.png}
    }
    \subfigure[\footnotesize $p=0$ and $\Psi_{\text{nn}}$ trained with $m=25$]{
        \includegraphics[width=0.4\linewidth]{figures/000_m25_big.png}
    } \\
    \subfigure[\footnotesize $p=0$ and $\Psi_{\text{nn}}$ trained with $\epsilon = {10^{-6}}$]{
    \includegraphics[width=0.4\linewidth]{figures/000_tol6_big.png}
    }
    \subfigure[\footnotesize $p=0.01$ and $\Psi_{\text{base}}$]{
        \includegraphics[width=0.4\linewidth]{figures/001_base_big.png}
    }\\
    \subfigure[\footnotesize $p=0.01$ and $\Psi_{\text{nn}}$ trained with $m=25$]{
        \label{fig:001_m25}
        \includegraphics[width=0.4\linewidth]{figures/001_m25_big.png}
    }
    \subfigure[\footnotesize $p=0.01$ and $\Psi_{\text{nn}}$ trained with $\epsilon = {10^{-6}}$]{
    \includegraphics[width=0.4\linewidth]{figures/001_tol6_big.png}
    }
    \caption{Comparison of distributions of the number of iterations.}
    \label{fig:distribution comparison}
    \vskip -0.2in
\end{figure*}

\subsubsection{Analysis on the Counter-example}
\label{sec:analysis min res}
To understand the property of the current approach (\ref{eq:min res}) and the cause of its performance degradation,
we analyze the counter-example in a simpler setting.
All of the proofs are presented in \cref{app:proofs}.
% The property of (\ref{eq:min res}) can be understood by analyzing a simpler example.

% \paragraph{Problem setting}
% Let us continue to consider solving 1D Poisson equation by the Jacobi method.
% It is known that the coefficient matrix $A$ and the corresponding Jacobi iteration matrix $M = I-\frac{1}{2}A$ have the following eigenvalues and eigenvectors.
% \begin{lemma}
%     \label{lem:eigen}
%     The eigenvalues of $A$ and $M$ are $\mu_i = 2 - 2 \cos \frac{i}{N+1}\pi$ and $\lambda_i = \cos \frac{i}{N+1}\pi$ for $i=1,2,\ldots,N$ respectively.
%     Their common corresponding eigenvectors are $v_i = (\sin\frac{1}{N+1}i\pi, \sin\frac{2}{N+1}i\pi, \ldots, \sin\frac{N}{N+1}i\pi)^T$.
% \end{lemma}

Let our task space contain two tasks $\T = \{\tau_1, \tau_2\}$,
where $\tau_i = \{f_i\}$ is a task to solve $A u = f_i$.
To solve $\tau_i$, we continue to use the Jacobi method $\Phi_m$ that starts with an initial guess $\hat u^{(0)}$
and gives an approximate solution $\hat u = \hat u^{(m)}$.
% \QL{simplify again the notation for the tasks}
We assume that $f_i = c_i \mu_i v_i$, where $c_i \in \R_{\geq 0}$ is a constant, and $v_i$ is the eigenvector of $A$ corresponding to the eigenvalue $\mu_i$.
We also assume $\mu_1 < \mu_2$, which means $\tau_1$ is more difficult than $\tau_2$ for the Jacobi method.
Let the task distribution $P$ generates $\tau_1$ with probability $p$ and $\tau_2$ with probability $1-p$ respectively.
Let $\L_m(\tau_i, \hat u) = \norm{u^{(m)} - A^{-1}f_i}^2$.
We note that the analysis in this paper is also valid for a loss function measuring the residual, e.g. $\L_m(\tau_i, \hat u) = \norm{Au^{(m)} - f_i}^2$.
Suppose that the meta-solver $\Psi$ gives a constant multiple of $f$ as initial guess $\hat u^{(0)}$,
i.e. $\hat u_i ^{(0)} = \Psi(\tau_i; \omega) = \omega f_i$, where $\omega \in \R$ is the trainable parameter of $\Psi$.

For the above problem setting, we can show that the solution error minimization (\ref{eq:min res}) has the unique minimizer
$\omega_m = \argmin_\omega \E_{\tau \sim P}[ \L_m(\tau, \Phi_m(\tau; \Psi(\tau; \omega))) ]$ (\cref{lem:write down}).
% \begin{equation}
%     \omega_m = \argmin_\omega \E_{\tau \sim P}[ \L_m(\tau, \Phi_m(\tau; \Psi(\tau; \omega))) ].
% \end{equation}
As for the minimizer $\omega_m$, we have the following result, which implies a problem in the current approach of solution error minimization.
\begin{proposition}
    \label{thm:opposite}
    For any $p \in (0, 1)$, we have
    \begin{equation}
        \label{eq:limit m}
        \lim_{m \to \infty} \omega_m = \frac{1}{\mu_1}.
    \end{equation}
    For any $\epsilon>0$ and $p \in (0, 1)$, there exists $m_0$ such that for any $m_1$ and $m_2$ satisfying $m_0 < m_1 < m_2$,
    \begin{equation}
        \label{eq:opposite}
        \E_{\tau \sim P}[ \L_\epsilon(\tau; \omega_{m_1}) ]
        \leq \E_{\tau \sim P}[ \L_\epsilon(\tau; \omega_{m_2}) ].
    \end{equation}
    Furthermore, for any $\epsilon > 0$ and $M>0$, there exists task space $(\T, P)$ such that for any $m \geq 0$,
    \begin{equation}
        \label{eq:unbounded}
        \E_{\tau \sim P}[ \L_\epsilon(\tau; \omega_{m}) ] > M.
    \end{equation} %\todo{to check}
\end{proposition}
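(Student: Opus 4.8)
The plan is to reduce all three claims to an explicit one‑dimensional study of the curve $m\mapsto\omega_m$. Since $f_i=c_i\mu_i v_i$ is aligned with an eigenvector of the Jacobi iteration matrix $G=I-D^{-1}A$ (which shares eigenvectors with $A$ because $D$ is a scalar multiple of the identity on the uniform mesh), the Jacobi error after $m$ steps is $\hat u_i^{(m)}-A^{-1}f_i=G^m(\omega f_i-A^{-1}f_i)=c_i(\omega\mu_i-1)\lambda_i^m v_i$, where $\lambda_i$ is the Jacobi eigenvalue of $v_i$; hence $\L_m(\tau_i;\omega)=a_i(\omega\mu_i-1)^2\lambda_i^{2m}$ with $a_i:=c_i^2\norm{v_i}^2>0$. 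The expected loss is a convex quadratic in $\omega$, and setting its derivative to zero recovers the minimizer of \cref{lem:write down}, $\omega_m=(pa_1\mu_1\lambda_1^{2m}+(1-p)a_2\mu_2\lambda_2^{2m})/(pa_1\mu_1^2\lambda_1^{2m}+(1-p)a_2\mu_2^2\lambda_2^{2m})$. Writing $A_1:=pa_1\mu_1>0$, $A_2:=(1-p)a_2\mu_2>0$ and $s_m:=(\lambda_2^2/\lambda_1^2)^m$, this is $\omega_m=h(s_m)$ with $h(s):=(A_1+A_2 s)/(A_1\mu_1+A_2\mu_2 s)$. The assumption that $\tau_1$ is harder than $\tau_2$ means $\lambda_1^2>\lambda_2^2$, so $s_m$ decreases strictly to $0$, while $h'(s)=A_1A_2(\mu_1-\mu_2)/(A_1\mu_1+A_2\mu_2 s)^2<0$ since $\mu_1<\mu_2$. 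Hence $\omega_m$ is strictly increasing in $m$ with $\omega_m\uparrow h(0)=1/\mu_1$, which is \eqref{eq:limit m}, and $\omega_m\in(1/\mu_2,1/\mu_1)$ for every $m\ge0$.

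Next I record that, for each fixed $k$, $\omega\mapsto\L_k(\tau_i;\omega)$ is a parabola with vertex $1/\mu_i$, so $\L_\epsilon(\tau_i;\omega)=\min\{k\ge0:\sqrt{a_i}\,|\omega\mu_i-1|\,|\lambda_i|^k\le\epsilon\}$ is non‑increasing for $\omega\le1/\mu_i$ and non‑decreasing for $\omega\ge1/\mu_i$. For \eqref{eq:opposite}: since $\omega_m\to1/\mu_1$ we have $\sqrt{\L_0(\tau_1;\omega_m)}=\sqrt{a_1}\,|\omega_m\mu_1-1|\to0$, so there is $m_0$ with $\sqrt{\L_0(\tau_1;\omega_m)}\le\epsilon$, hence $\L_\epsilon(\tau_1;\omega_m)=0$, for all $m>m_0$. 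Then $\E_{\tau\sim P}[\L_\epsilon(\tau;\omega_m)]=(1-p)\L_\epsilon(\tau_2;\omega_m)$ for $m>m_0$, and because $\omega_m$ increases within $(1/\mu_2,1/\mu_1)$ and $\L_\epsilon(\tau_2;\cdot)$ is non‑decreasing there, $m\mapsto\E_{\tau\sim P}[\L_\epsilon(\tau;\omega_m)]$ is non‑decreasing on $m>m_0$, which is exactly \eqref{eq:opposite}.

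For \eqref{eq:unbounded} I design the task space so that the window $[\omega_0,1/\mu_1)$ accessible to $\omega_m$ never brings $\tau_2$ near its solution. Fix two eigenvalues $\mu_1<\mu_2$ of $A$ with $0<\lambda_2^2<\lambda_1^2$ (e.g.\ two low‑frequency modes of the Poisson matrix), take $p=\tfrac12$, and choose $c_1,c_2$ so that $a_1=a_2=a$ with $a>0$ free. Then $\omega_0=h(1)=(\mu_1+\mu_2)/(\mu_1^2+\mu_2^2)\in(1/\mu_2,1/\mu_1)$ and $\delta:=\omega_0\mu_2-1=\mu_1(\mu_2-\mu_1)/(\mu_1^2+\mu_2^2)>0$, both independent of $a$. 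By the monotonicity above, $\omega_m\ge\omega_0$, so $\L_\epsilon(\tau_2;\omega_m)\ge\L_\epsilon(\tau_2;\omega_0)=\min\{k\ge0:\sqrt{a}\,\delta\,|\lambda_2|^k\le\epsilon\}\ge\log(\sqrt{a}\,\delta/\epsilon)/\log(1/|\lambda_2|)$, whose right‑hand side tends to $\infty$ as $a\to\infty$ with $\mu_1,\mu_2$ (hence $|\lambda_2|$) fixed. Choosing $a$ so large that this lower bound exceeds $2M$ gives $\E_{\tau\sim P}[\L_\epsilon(\tau;\omega_m)]\ge\tfrac12\L_\epsilon(\tau_2;\omega_m)>M$ for every $m\ge0$.

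The main obstacle is establishing the monotonicity of $\omega_m$ in $m$: once the reparametrization $\omega_m=h(s_m)$ and the sign of $h'$ (which uses $\mu_1<\mu_2$) are in place, the rest is bookkeeping with the ``saturating'' integer‑valued definition of $\L_\epsilon$. It is precisely the fact that $\L_\epsilon(\tau_1;\cdot)$ collapses to $0$ once $\omega$ is within $\epsilon$‑tolerance of $1/\mu_1$ that removes the difficult‑task contribution from the expectation for large $m$ and isolates the monotone, unbounded contribution of the easy task $\tau_2$.
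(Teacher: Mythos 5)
Your proof is correct and follows essentially the same route as the paper's: write $\omega_m$ in closed form, show it increases to $1/\mu_1$, note that $\E[\L_\epsilon(\cdot;\omega)]$ is eventually monotone in $\omega$ near $1/\mu_1$ (because the hard task's contribution vanishes once $\omega$ is within $\epsilon$-tolerance), and then engineer unboundedness via large coefficients. Two small but genuine points where you differ from, and in fact sharpen, the paper: (i) the paper's proof simply asserts ``$\omega_m$ is increasing in $m$'' without an argument, whereas you supply a clean proof via the reparametrization $\omega_m=h(s_m)$ with $s_m=(\lambda_2^2/\lambda_1^2)^m$ decreasing and $h'(s)=A_1A_2(\mu_1-\mu_2)/(\cdot)^2<0$, which is exactly the missing step; you are also explicit that the ordering needed is $\lambda_1^2>\lambda_2^2$ (not merely $\lambda_1>\lambda_2$), which the paper glosses over. (ii) For \eqref{eq:unbounded} the paper lets $p\to 0$ with $c_1,c_2$ chosen as functions of $p$ and $\epsilon$ (a computation it leaves largely implicit), while you fix $p=\tfrac12$ and scale a single amplitude $a=a_1=a_2\to\infty$; since $\omega_0=(\mu_1+\mu_2)/(\mu_1^2+\mu_2^2)$ and hence $\delta=\omega_0\mu_2-1>0$ are independent of $a$, and $\omega_m\ge\omega_0$ by the monotonicity you established, the lower bound $\L_\epsilon(\tau_2;\omega_m)\ge\log(\sqrt a\,\delta/\epsilon)/\log(1/|\lambda_2|)$ is uniform in $m$ and blows up as $a\to\infty$. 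This is a more transparent construction that buys the same conclusion with less bookkeeping, and it reuses the monotonicity lemma rather than repeating the computation from the counterpart result for $\omega_\delta$.
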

% \QL{
Note that if $\omega = 1/\mu_i$, the meta-solver $\Psi$ gives the best initial guess, the solution, for $\tau_i$,
i.e. $\Psi(\tau_i; 1/\mu_i) = f_i/\mu_i = A^{-1}f_i$.
\cref{thm:opposite} shows the solution error is not a good surrogate loss for learning to accelerate iterative methods in the following sense.
First, \cref{eq:limit m} shows that $\E_{\tau_i \sim P}[\L_m]$ is dominated by the loss for $\tau_1$ as $m$ becomes large.
Hence, $\tau_2$ is ignored regardless of its probability $1-p$, which can cause a problem when $1-p$ is large.
Second, Inequality (\ref{eq:opposite}) implies that training with more iterative steps may lead to worse performance.
Morevoer, Inequality (\ref{eq:unbounded}) shows that the end result can be arbitrarily bad.
These are consistent with numerical results in \cref{sec:counter-example}.
This motivates our subsequent proposal of a principled
approach to achieve acceleration of iterative methods
using meta-learning techniques.

\section{A Novel Approach to Accelerate Iterative Methods}
\label{sec:our approach}

As we observe from the example presented in \cref{sec:current approach},
the main issue is that in general, the solution error $\L_m$ is not a valid surrogate for the number of iterations $\L_\epsilon$.
Hence, the resolution of this problem amounts to finding
a valid surrogate for $\L_\epsilon$ that is also amenable to training.
An advantage of our formalism for GBMS introduced in \cref{sec:general formulation}
is that it tells us exactly the right loss to minimize, which is precisely (\ref{eq:min num}).
% \begin{equation}
%     \label{eq:min num2}
%     \min_\omega \E_{\tau \sim P}[ \L_\epsilon(\tau, \Phi_\epsilon(\tau; \Psi(\tau; \omega))) ].
% \end{equation}
It remains then to find an approximation of this loss function,
and this is the basis of the proposed method, which will be explained in \cref{sec:surrogate}.
First, we return to the example in \cref{sec:analysis min res} and show now that minimizing $\L_\epsilon$
instead of $\L_m$ guarantees performance improvement.

% \QL{
%     \subsection{Minimizing the number of iterations}

%     We just want enough details to introduce Prop 4.2+4.3 (put together.)
%     Organization
%     \begin{itemize}
%         \item Define $\omega_{\epsilon}$ as minimizer of ...;
%         \item Then return to the example and give Prop 4.2+4.3+they have different limits.
%         \item Discuss the consequence of this result, and contrast with previous proposition.
%         \item Give numerical evidence to show that this is indeed an improvement in general for Poisson.
%     \end{itemize}
% }

\subsection{Minimizing the Number of Iterations}
\label{sec:min num}

% To address the mismatch and improve the performance, we propose a novel meta-solving problem to directly minimize the number of iterations:
% \begin{equation}
%     \label{eq:min num}
%     \min_\omega \E_{\tau \sim P}[ L_\epsilon(\tau, \Phi_\epsilon(\tau; \Psi(\tau; \omega))) ],
% \end{equation}
% where $\Phi_\epsilon$ stops when the $m$-step error $\L_m$ reaches tolerance $\epsilon$,
% and $\L_\epsilon$ is the required number of iterations.

% The proposed approach reflects the uniqueness of meta-solving in scientific computing applications.
% In normal meta-learning problems, the number of inner gradient descent steps to update neural networks is the same during training and testing,
% because, at the time of testing, there is no metrics measuring the accuracy of the output and no other stopping criteria of the gradient descent.
% In a normal meta-learning problem, the gradient descent method is used with the same number of steps during training and testing.
% is different from the previous meta-solving approaches in two aspects.
% By contrast, in scientific computing, the accuracy of the solution can be monitored by the residual of the target equation even at testing,
% and we can determine when to stop the inner solver according to the monitored accuracy.
% This feature of scientific computing makes the proposed approach possible.

% Mention the difference from MAML and uniqueness of scientific computing applications.
\subsubsection{Analysis on the Counter-example}
\label{sec:analysis min num}
To see the property and advantage of directly minimizing $\L_\epsilon$, we analyze the same example in \cref{sec:analysis min res}.
The problem setting is the same as in \cref{sec:analysis min res},
except that we minimize $\L_\epsilon$ instead of $\L_m$ with $\Phi_\epsilon$ instead of $\Phi_m$.
Note that we relax the number of iterations $m \in \Z_{\geq 0}$ to $m \in \R_{\geq 0}$ for the convenience of the analysis.
All of the proofs are presented in \cref{app:proofs}.

As in \cref{sec:analysis min res}, we can find the minimizer
$\omega_\epsilon = \argmin_\omega \E_{\tau \sim P}[ \L_\epsilon(\tau, \Phi_\epsilon(\tau; \Psi(\tau; \omega))) ]$
% \begin{equation}
%     \omega_\epsilon = \argmin_\omega \E_{\tau \sim P}[ \L_\epsilon(\tau, \Phi_\epsilon(\tau; \Psi(\tau; \omega))) ],
% \end{equation}
by simple calculation (\cref{lem:write down2}).
As for the minimizer $\omega_\epsilon$, we have the counterpart of \cref{thm:opposite}:

\begin{proposition} %[Guarantee of improvement by meta-solving]
    \label{thm:guarantee2}
    we have
    \begin{equation}
        \lim_{\epsilon \to 0}\omega_\epsilon =
        \begin{cases}
            \frac{1}{\mu_1} & \text{if } p > p_0  \\
            \frac{1}{\mu_2} & \text{if } p < p_0,
        \end{cases}
    \end{equation}
    where $p_0$ is a constant depending on $\mu_1$ and $\mu_2$.
    If $\frac{c_1 c_2 (\mu_2 - \mu_1)}{2(c_1 \mu_1 + c_2 \mu_2)} > \delta_1 > \delta_2$ and $p \notin (p_0, p_{\delta_1})$, then for any $\epsilon \leq \delta_2$,
    \begin{equation}
        \label{eq:guarantee2}
        \E_{\tau \sim P}[ \L_\epsilon(\tau; \omega_{\delta_1}) ]
        > \E_{\tau \sim P}[ \L_\epsilon(\tau; \omega_{\delta_2}) ],
    \end{equation}
    where $p_{\delta}$ is a constant depending on tolerance $\delta$ expained in \cref{lem:write down2}, and $p_0$ is its limit as $\delta$ tends to $0$.
    Furthermore, for any $c > 0 $, $\epsilon > 0$, and $\delta \geq \epsilon$, there exists a task space $(\T, P)$ such that for any $m \geq 0$,
    % $\E_{\tau \sim P}[ L_\epsilon(\tau; \omega_m) ]> c\E_{\tau \sim P}[ L_\epsilon(\tau; \omega_\delta) ]$.
    \begin{equation}
        \label{eq:arbitrarily bad}
        \E_{\tau \sim P}[ \L_\epsilon(\tau; \omega_m) ]> c\E_{\tau \sim P}[ \L_\epsilon(\tau; \omega_\delta) ].
    \end{equation}
\end{proposition}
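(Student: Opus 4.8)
The plan is to derive everything from the closed form of $\omega_\epsilon$ in \cref{lem:write down2}. Let $\lambda_i$ be the eigenvalue of the Jacobi iteration matrix on $v_i$ and $\gamma_i:=-\log|\lambda_i|>0$ (so $\gamma_1\le\gamma_2$, since $\mu_1<\mu_2$ makes $\tau_1$ the harder task), and write $\Delta:=1/\mu_1-1/\mu_2>0$. Since the error along $v_i$ decays geometrically with factor $|\lambda_i|$ from the initial value $c_i\mu_i|\omega-1/\mu_i|$, the relaxed iteration count is $\L_\epsilon(\tau_i;\omega)=\gamma_i^{-1}\max\big(0,\log(c_i\mu_i|\omega-1/\mu_i|/\epsilon)\big)$, so that $F_\epsilon(\omega):=\E_{\tau\sim P}[\L_\epsilon(\tau;\omega)]$ is a sum of two such terms with weights $p/\gamma_1$ and $(1-p)/\gamma_2$, each vanishing on the interval $I_i(\epsilon)=[1/\mu_i-\epsilon/(c_i\mu_i),1/\mu_i+\epsilon/(c_i\mu_i)]$. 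For $\epsilon$ small these intervals are disjoint and $I_2$ lies left of $I_1$; on the gap between them $F_\epsilon$ is a sum of two logarithms, hence strictly concave, so its minimizer over $\R$ is one of the two inner endpoints, i.e.\ $\omega_\epsilon\in\{1/\mu_1-\epsilon/(c_1\mu_1),\,1/\mu_2+\epsilon/(c_2\mu_2)\}$, whichever gives the smaller $F_\epsilon$. The first claim is then immediate: the value of $F_\epsilon$ at $1/\mu_1-\epsilon/(c_1\mu_1)$ equals $\tfrac{1-p}{\gamma_2}\log(c_2\mu_2\Delta/\epsilon-c_2\mu_2/(c_1\mu_1))$ and at $1/\mu_2+\epsilon/(c_2\mu_2)$ equals $\tfrac{p}{\gamma_1}\log(c_1\mu_1\Delta/\epsilon-c_1\mu_1/(c_2\mu_2))$; as $\epsilon\to0$ both logarithms diverge like $-\log\epsilon$, so the comparison is settled by the coefficients $\tfrac{1-p}{\gamma_2}$ versus $\tfrac{p}{\gamma_1}$, with crossover $p_0=\gamma_1/(\gamma_1+\gamma_2)$ (a constant depending on $\mu_1,\mu_2$). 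Since the two endpoints themselves tend to $1/\mu_1$ and $1/\mu_2$, $\omega_\epsilon\to1/\mu_1$ for $p>p_0$ and $\omega_\epsilon\to1/\mu_2$ for $p<p_0$; here $p_\delta$ is the root of the equation equating the two endpoint values at tolerance $\delta$, and $p_\delta\to p_0$ as $\delta\to0$.

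For the second claim, the hypothesis $p\notin(p_0,p_{\delta_1})$ — read as the open interval between these two values, using that $\delta\mapsto p_\delta$ is monotone with limit $p_0$ at $\delta=0$ — places $p$ on one fixed side of $p_\delta$ for every $\delta\in(0,\delta_1]$, and on the corresponding side of $p_0$; hence $\omega_\delta$ stays on one fixed side for all such $\delta$. I treat the case $p\le p_0$, where $\omega_\delta=1/\mu_2+\delta/(c_2\mu_2)$ (the case $p\ge p_0$ is symmetric with $\tau_1\leftrightarrow\tau_2$). Then, using $c_2\mu_2\cdot\delta/(c_2\mu_2)=\delta$ and the fact that $\epsilon\le\delta_2$ keeps the $\tau_2$-term in its logarithmic (nonnegative) regime,
\begin{equation*}
\E_{\tau\sim P}[\L_\epsilon(\tau;\omega_\delta)]=\frac{p}{\gamma_1}\log\frac{c_1\mu_1(\Delta-\delta/(c_2\mu_2))}{\epsilon}+\frac{1-p}{\gamma_2}\log\frac{\delta}{\epsilon}=:G(\delta).
\end{equation*}
Differentiating, $G'(\delta)>0$ precisely when $\delta<\delta^\ast(p):=\tfrac{(1-p)\gamma_1c_2\mu_2\Delta}{p\gamma_2+(1-p)\gamma_1}$, and $\delta^\ast$ is decreasing in $p$ with $\delta^\ast(p_0)=\tfrac12 c_2\mu_2\Delta=\tfrac{c_2(\mu_2-\mu_1)}{2\mu_1}>\tfrac{c_1c_2(\mu_2-\mu_1)}{2(c_1\mu_1+c_2\mu_2)}>\delta_1$ (the strict inequalities use $c_2\mu_2>0$ and the stated bound on $\delta_1$). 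Hence $G$ is strictly increasing on $(0,\delta_1]$, so $G(\delta_1)>G(\delta_2)$ for $\delta_2<\delta_1$, which is \eqref{eq:guarantee2}; the factor $2$ in the hypothesis is exactly what makes this chain close.

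For the third claim I would build $(\T,P)$ in which the solution-error minimizer is driven to the wrong side for every $m$ while the iteration-count minimizer picks the right side. Fix the matrix scale and $\mu_2$, then take $\mu_1$ small (so $c_2\mu_2\Delta/(2\epsilon)$ is as large as desired), then $c_1$ very large, then $p$ very small, arranged so that $p\,c_1^2\gg c_2^2$ but $p\log c_1\to0$. By \cref{lem:write down}, $\omega_m$ minimizes a quadratic whose $\tau_1$-coefficient $p\lambda_1^{2m}c_1^2$ dominates its $\tau_2$-coefficient $(1-p)\lambda_2^{2m}c_2^2$ uniformly in $m$ (since $\lambda_i^{2m}=|\lambda_i|^{2m}$, $|\lambda_1|\ge|\lambda_2|$, and $pc_1^2\gg c_2^2$), so $\omega_m$ stays within $O((1-p)c_2^2/(pc_1^2))$ of $1/\mu_1$ for all $m\ge0$; in particular $|\omega_m-1/\mu_2|\ge\Delta/2$, giving
\begin{equation*}
\E_{\tau\sim P}[\L_\epsilon(\tau;\omega_m)]\ge(1-p)\,\L_\epsilon(\tau_2;\omega_m)\ge\frac{1-p}{\gamma_2}\log\frac{c_2\mu_2\Delta}{2\epsilon}=:K>0
\end{equation*}
uniformly in $m$. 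Meanwhile, for $p$ small the side-$2$ value $\tfrac{p}{\gamma_1}\log(c_1\mu_1\Delta/\delta)+o(1)$ of $F_\delta$ falls below its side-$1$ value (which is bounded below), so $\omega_\delta=1/\mu_2+\delta/(c_2\mu_2)$ and $\E_{\tau\sim P}[\L_\epsilon(\tau;\omega_\delta)]\le\tfrac{p}{\gamma_1}\log(c_1\mu_1\Delta/\epsilon)+\tfrac{1}{\gamma_2}\log(\delta/\epsilon)$. Shrinking $p$ (after $c_1$) makes the first term negligible; if $\delta=\epsilon$ the second vanishes and the ratio diverges, while if $\delta>\epsilon$ it is asymptotically $\log(c_2\mu_2\Delta/2\epsilon)/\log(\delta/\epsilon)$, which exceeds any prescribed $c$ once $\mu_2/\mu_1$ is taken large enough — giving \eqref{eq:arbitrarily bad} for all $m\ge0$.

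The main obstacle is this last construction: one has to fix the parameters $\mu_1,\mu_2,c_1,p$ in the right order and simultaneously meet two opposing demands on $\omega_m$ — that the $\tau_1$-term dominate the defining quadratic for every $m$ at once, so $\omega_m$ is forced near $1/\mu_1$, yet that the $\tau_2$-contribution to $\E_{\tau\sim P}[\L_\epsilon(\tau;\omega_m)]$ remain a positive constant larger than $c$ times the (uniformly controlled) cost of $\omega_\delta$. A secondary delicate point, in the second claim, is establishing the monotonicity of $\delta\mapsto p_\delta$ that lets one read the side of $\omega_\delta$ off the hypothesis $p\notin(p_0,p_{\delta_1})$, and checking that the factor $2$ in the bound on $\delta_1$ is exactly what forces $G'>0$ throughout $(0,\delta_1]$.
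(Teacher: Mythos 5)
Your proof is essentially correct and captures all three claims. Part (i) matches the paper (both via the closed form of $\omega_\epsilon$ from \cref{lem:write down2}). For part (ii), you reparametrize by $\delta$ and differentiate $G(\delta)=F_\epsilon(\omega_\delta)$ directly, finding the critical tolerance $\delta^\ast(p)$; the paper instead compares $\omega_{\delta_1},\omega_{\delta_2},\omega_\epsilon$ against $\omega_{\mathrm{max}}:=\argmax_{\omega\in[\omega_{\epsilon,2},\omega_{\epsilon,1}]}F_\epsilon(\omega)$ and uses monotonicity of $F_\epsilon$ in $\omega$. These are two views of the same fact ($\omega_{\delta^\ast(p)}=\omega_{\mathrm{max}}$), and your derivative computation is a cleaner way to see that the hypothesis on $\delta_1$ keeps everything on one side of the max. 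For part (iii) you genuinely depart from the paper: the paper fixes $\mu_1,\mu_2$ and prescribes $c_1\sim p^{-2}\delta$, $c_2\sim p^{-1}\delta$ explicitly, letting $p\to0$; you instead fix $c_2,\mu_2$, shrink $\mu_1$ to inflate $\Delta$ (hence the ratio's numerator), and then couple $c_1\to\infty$, $p\to0$ (e.g. $c_1=1/p$) to pin $\omega_m$ near $1/\mu_1$ for all $m$. Both work; the paper's is more explicit, yours is more modular and makes the mechanism (a single hard outlier dominating $\omega_m$ while contributing almost nothing to $\L_\epsilon$) more transparent.

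Two small inaccuracies worth fixing. First, the symmetric case is $p\ge p_{\delta_1}$, not $p\ge p_0$: for $p\in(p_0,p_{\delta_1})$ the minimizer $\omega_\delta$ switches branches as $\delta$ varies, which is precisely why that interval is excluded; "symmetric with $\tau_1\leftrightarrow\tau_2$" applies only once $p$ is above $p_{\delta_1}$, where $\omega_\delta=\omega_{\delta,1}$ for every $\delta\le\delta_1$, and the mirror computation then gives a threshold $\delta^{\ast\ast}(p)=\tfrac{p\gamma_2 c_1\mu_1\Delta}{(1-p)\gamma_1+p\gamma_2}$ increasing in $p$ with $\delta^{\ast\ast}(p_0)=c_1\mu_1\Delta/2$. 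Second, the claim that the factor $2$ in the bound on $\delta_1$ "is exactly what makes this chain close" overstates the tightness: the actual thresholds are $c_2\mu_2\Delta/2$ and $c_1\mu_1\Delta/2$, and the hypothesized constant $\tfrac{c_1c_2(\mu_2-\mu_1)}{2(c_1\mu_1+c_2\mu_2)}$ is their harmonic mean, strictly smaller than both — a sufficient but not sharp condition, chosen because it works symmetrically. Finally, you correctly flag that monotonicity of $\delta\mapsto p_\delta$ (i.e.\ $p_0<p_\epsilon\le p_{\delta_2}<p_{\delta_1}$) is used but not established; the paper asserts the same chain without proof, so you are on equal footing there.
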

Note that the assumption $\delta_1 < \frac{c_1 c_2 (\mu_2 - \mu_1)}{2(c_1 \mu_1 + c_2 \mu_2)}$
is to avoid the trivial case, $\min_\omega \E_{\tau_i \sim P}[\L_\epsilon(\tau_i; \omega)] = 0$,
and the interval $(p_0, p_{\delta_1})$ is reasonably narrow for such small $\delta_1$.
From \cref{thm:guarantee2}, the following insights can be gleaned.
First, the minimizer $\omega_\epsilon$ has different limits depending on task probability and difficulty,
while the limit of $\omega_m$ is always determined by the difficult task.
In other words, $\L_\epsilon$ and $\L_m$ weight tasks differently.
Second, Inequality (\ref{eq:guarantee2}) guarantees the improvement,
that is, training with the smaller tolerance $\delta_2$ leads to better performance for a target tolerance $\epsilon \leq \delta_2$.
We remark that, as shown in \cref{thm:opposite}, this kind of guarantee is not available in the current approach,
where a larger number of training iterations $m_2$, i.e. larger training cost, does not necessarily improve the performance.
Furthermore, (\ref{eq:arbitrarily bad}) implies the existence of a problem where the current method,
even with hyper-parameter tuning, performs arbitrarily worse than the right problem formulation (\ref{eq:min num}).
These clearly shows the advantage and necessity of a valid surrogate loss function for $\L_\epsilon$,
which we will introduce in \cref{sec:surrogate}.

\subsubsection{Resolving the Counter-example}
\label{sec:counter-example2}
Before introducing our surrogate loss function,
we present numerical evidence that the counter-example, solving Poisson equations in \cref{sec:counter-example},
is resolved by directly minimizing the number of iterations $\L_\epsilon$.
The problem setting is the same as in \cref{sec:counter-example} except for the solver and the loss function.
Instead of solver $\Phi_m$, we use solver $\Phi_\epsilon$ that stops when the error $\L_m$ becomes smaller than $\epsilon$.
Also, we directly minimize $\L_\epsilon$ instead of $\L_m$.
Although $\L_\epsilon$ is discrete and (\ref{eq:min num}) cannot be solved by gradient-based algorithms,
we can use a differentiable surrogate loss function $\tilde \L_\epsilon$ for $\L_\epsilon$, which will be explained in \cref{sec:surrogate}.
The following numerical result is obtained by solving the surrogate problem (\ref{eq:surrogate}) by \cref{alg:minimize Leps}.

% \paragraph{Result}
% Ours is better than the conventional one.
\cref{fig:performance} and \cref{fig:distribution comparison} show the advantage of the proposed approach (\ref{eq:min num})
over the current one (\ref{eq:min res}).
% and the details are presented in \cref{app:results}.
% In \cref{fig:performance} and \cref{fig:distribution comparison},
% \cref{fig:performance} compares the meta-solvers trained with different solvers and corresponding loss functions,
% and from the comparison, we can see the advantage of our formulation (\ref{eq:surrogate}) over the conventional one (\ref{eq:min res}).
For the case of $p=0$, although the current method can perform comparably with ours by choosing a good hyper-parameter $m=25$,
ours performs better without the hyper-parameter tuning.
For the case of $p=0.01$, the current method degrades and performs poorly even with the hyper-parameter tuning,
while ours keeps its performance and reduces the number of iterations
by 78\% compared to the constant baseline $\Psi_{\text{base}}$ and 70\% compared to the best-tuned current method ($m=25$).
We note that this is the case implied in \cref{thm:guarantee2}.
\cref{fig:distribution comparison} illustrates the cause of this performance difference.
% For $p=0$, where all tasks are sampled from the easy distribution $P_2$ and have similar difficulty,
% the conventional method and ours result in similar distriutions of the number of iterations.
% However, for $p=0.01$, where tasks are sampled from the difficult distribution $P_1$ with probability $0.01$,
% the conventional method reduces the number of iterations for difficult tasks but increases it for easy tasks consisting the majority.
While the current method is distracted by a few difficult tasks and increases the number of iterations for the majority of easy tasks,
our method reduces the number of iterations for all tasks in particular for the easy ones.
% which results in overall 70\% reduction in the number of iterations compared to the conventional method.
This difference originates from the property of $\L_m$ and $\L_\delta$ explained in \cref{sec:analysis min num}.
That is, $\L_m$ can be dominated by the small number of difficult tasks
whereas $\L_\delta$ can balance tasks of different difficulties depending on their difficulty and probability.
% In other words, $L_m$ is sensitive to difficult outliers, but $L_\delta$ is robust to them.

% \subsubsection{Generalization bound}
% In this section, we show a generalization bound for $\L_\delta$.
% Let $\Phi: \T \times \Theta \to \U$ be an iterative solver and $\Psi: \T \times \Omega \to \Theta$ be a meta-solver.
% Let $\mathcal H = \{\Phi(\tau, \Psi(\tau, \omega))\}_{\omega \in \Omega}$.
% $\L_m:\T \times \U \to \R \cup \{\infty\}$ is a loss function that depends only on $m$-th step solution $\hat u^{(m)}$
% and $\L_\delta:\T \times \U \to \R \cup \{\infty\}$ is defined by $\L_\delta(\tau, u) = \min \{m \geq 0: \L_m(\tau, u) \leq \delta\}$.

% \begin{theorem}[Generalization bound]
%     \label{thm:statistical guarantee}
%     Let $S = \{\tau_1, \tau_2, \ldots, \tau_N\}$ be i.i.d. samples from task space $(\T, P)$.
%     Let $\omega_{\delta, N} \in \argmin_{\omega \in \Omega} \frac{1}{N}\sum_{N}^{i=1}\L_\delta(\tau_i; \omega)$.
%     Assume that for any $\omega \in \Omega$ and $\tau \in \T$, we have $\L_\delta(\tau; \omega) < c$.
%     Then, for any $\zeta > 0$, with probability at least $1-\zeta$, we have
%     \begin{equation}
%         \label{eq:statistical guarantee}
%         \E_{\tau \sim P}[\L_\delta(\tau; \omega)]
%         \leq \frac{1}{N}\sum_{i=1}^{N}\L_\delta(\tau_i; \omega_{\delta, N})
%         + 2 \mathcal R (\Omega)
%         + 4 c \sqrt{\frac{2 \log (4 / \zeta)}{N}}.
%     \end{equation}
% \end{theorem}

\subsection{Surrogate Problem}
\label{sec:surrogate}
We have theoretically and numerically shown the advantage of directly minimizing the number of iterations $\L_\epsilon$ over the current approach of minimizing $\L_m$.
Now, the last remaining issue is how to minimize $\E_{\tau \sim P}[\L_\epsilon]$.
As with any machine learning problem, we only have access to finite samples from the task space $(\T, P)$.
Thus, we want to minimize the empirical loss:
$\min_{\omega} \frac{1}{N}\sum_{i=1}^{N}\L_\epsilon(\tau_i; \omega)$.
However, the problem is that $\L_\epsilon$ is not differentiable with respect to $\omega$.
To overcome this issue, we introduce a differentiable surrogate loss function $\tilde \L_\epsilon$ for $\L_\epsilon$.
% \QL{Have some connecting sentences from previous.}

% \QL{First talk about Naive empirical version of Leps, and then say that we need a differentiable modification. Then introduce it}
% We have access only for finite samples from
% The last remaining problem is that the loss function $L_\epsilon$ is not differentiable.
% In this section, we introduce a differentiable surrogate loss function for discrete $L_\epsilon$ to solve (\ref{eq:min num}) by GBMS.
To design the surrogate loss function, we first express $\L_\epsilon$ by explicitly counting the number of iterations.
We define $\L_\epsilon^{(m)}$ by
\begin{equation}
    \label{eq:indicator}
    \L_\epsilon^{(k+1)}(\tau; \omega) = \L_\epsilon^{(k)}(\tau; \omega) + \mathbb{1}_{\L_k(\tau; \omega) > \epsilon},
\end{equation}
where $\L_\epsilon^{(0)}(\tau; \omega) = 0$,
$\L_k$ is any differentiable loss function that measures the quality of the $k$-th step solution $\hat u^{(k)}$,
and $\mathbb{1}_{\L_k(\tau; \omega) > \epsilon}$
is the indicator function that returns $1$ if $\L_k(\tau; \omega) > \epsilon$ and $0$ otherwise.
Then, we have $\L_\epsilon(\tau; \omega) = \lim_{m\to\infty}\L_\epsilon^{(m)}(\tau; \omega)$ for each $\tau$ and $\omega$.
Here, $\L_\epsilon^{(m)}$ is still not differentiable because of the indicator function.
Thus, we define $\tilde \L_\epsilon^{(m)}$ as a surrogate for $\L_\epsilon^{(m)}$
by replacing the indicator function to the sigmoid function with gain parameter $a$ \cite{Yin2019-kz}.
In summary, $\tilde \L_\epsilon^{(m)}$ is defined by
\begin{equation}
    \tilde \L_\epsilon^{(k+1)}(\tau; \omega) = \tilde \L_\epsilon^{(k)}(\tau; \omega) + \sigma_a(\L_k(\tau; \omega) - \epsilon).
\end{equation}
Then, we have $\L_\epsilon(\tau; \omega) = \lim_{m\to\infty} \lim_{a \to \infty} \tilde \L_\epsilon^{(m)}(\tau; \omega)$
for each $\tau$ and $\omega$.
% We note that taking the logarithm of the residuals before applying the sigmoid function,
% $\sigma_a(\log \L_k(\tau; \omega) - \log \epsilon)$, improves the performance in practice,
% because $\L_k(\tau; \omega) - \epsilon$ can be almost $0$ and not a good surrogate for $\mathbb{1}_{\L_k(\tau; \omega) > \epsilon}$ in that case.
% We use this logarithm version for numerical examples in this paper.
% Then, we can compute $\tilde \L_\epsilon^{(m)}$ until the error reaches tolerance $\epsilon$
% and use it as a surrogate for $\L_\epsilon$.

However, in practice, the meta-solver $\Psi$ may generate a bad solver parameter particularly in the early stage of training.
The bad parameter can make $\L_\epsilon$ very large or infinity at the worst case,
and it can slow down or even stop the training.
To avoid this, we fix the maximum number of iterations $m$ sufficiently large and use $\tilde \L_\epsilon^{(m)}$
as a surrogate for $\L_\epsilon$ along with solver $\Phi_{\epsilon, m}$,
which stops when the error reaches tolerance $\epsilon$ or the number of iterations reaches $m$.
Note that $\Phi_{\epsilon} = \Phi_{\epsilon, \infty}$ and $\Phi_{m} = \Phi_{0, m}$ in this notation.
In summary, a surrogate problem for (\ref{eq:min num}) is defined by
\begin{equation}
    \label{eq:surrogate}
    \min_\omega \E_{\tau \sim P}[ \tilde \L^{(m)}_\epsilon(\tau, \Phi_{\epsilon, m}(\tau; \Psi(\tau; \omega))) ],
\end{equation}
which can be solved by GBMS (\cref{alg:minimize Leps}).
% To solve the defined meta-solving problem, we carefully design the loss function $L_\tau$.
% The new loss function is necessary
% because the relaxation factor that minimizes the number of iterations for a given tolerance can be different from
% the relaxation factor that minimizes the residual at a given number of iterations.
% Ideally, $L_\tau$ should be $k_\epsilon$.
% and define $L_\tau$ by
% \begin{equation}
%     L_\tau = \kappa^{(K)} + \alpha \mathrm{ReLU}(\norm{g(y^{(K)})} - \epsilon).
% \end{equation}
% The first term $\kappa^{(K)}$ counts the number of iterations until the residual reaches the tolerance,
% which is defined by
% \begin{align}
%     \kappa^{(0)}   & = 0                                                         \\
%     \kappa^{(k+1)} & = \kappa^{(k)} + \mathbf{1}_{\norm{g(y^{(k)})} > \epsilon}.
%     \label{eq:indicator}
% \end{align}
% Thus, $\kappa^{(K)} = k_\epsilon$ if the residual reaches the tolerance within $K$ iterations.
% The second term is the residual, which becomes effective only if the residual does not reach the tolerance within $K$ iterations.
% $\alpha$ is a hyper-parameter to balance the two terms, and $\alpha=100$ is used throughout \cref{sec:relaxation}.
% In summary, the loss function becomes
% \begin{equation}
%     L_\tau=\begin{cases}
%         k_\epsilon,                                & \text{if } k_\epsilon \leq K. \\
%         K + \alpha (\norm{g(y^{(K)})} - \epsilon), & \text{otherwise}.
%     \end{cases}
% \end{equation}

\begin{algorithm}
    \caption{GBMS for minimizing the number of iterations}
    \label{alg:minimize Leps}
    \begin{algorithmic}
        \STATE {\bfseries Input:}
        $(P, \T)$: task space,
        $\Psi$: meta-solver,
        $\Phi$: iterative solver,
        $\phi$: iterative function of $\Phi$,
        $m$: maximum number of iterations of $\Phi$,
        $\epsilon$: tolerance of $\Phi$,
        $\L$: loss function for solution quality,
        $S$: stopping criterion for outer loop,
        $\mathrm{Opt}$: gradient-based algorithm

        \WHILE {$S$ is not satisfied}
        \STATE $\tau \sim P$
        \hfill \COMMENT {sample task $\tau$ from $P$}
        \STATE $\theta_{\tau} \gets \Psi(\tau; \omega)$
        \hfill \COMMENT {generate $\theta_\tau$ by $\Psi$ with $\omega$}
        \STATE $k \gets 0$, $\hat u \gets \Phi_0(\tau ; \theta_\tau)$, $\tilde \L_\epsilon(\tau, \hat u) \gets 0$
        \hfill \COMMENT {initialize}
        \WHILE {$k<m$ or $\L(\tau, \hat u) > \epsilon$}
        \STATE $k \gets k + 1$, $\tilde \L_\epsilon(\tau, \hat u) \gets \tilde \L_\epsilon(\tau, \hat u) + \sigma_a(\L(\tau, \hat u) - \epsilon)$
        \hfill \COMMENT {count iterations and compute surrogate loss}
        \STATE $\hat u \gets \phi(\hat u; \theta_\tau)$
        \hfill \COMMENT {iterate $\phi$ with $\theta_\tau$ to update $\hat u$}

        \ENDWHILE
        % \STATE $\hat u(\omega) \gets \Phi(\tau; \theta_{\tau}(\omega))$
        % \hfill \COMMENT {solve $\tau$ by $\Phi$ with $\theta_\tau$}
        \STATE $\omega \gets \mathrm{Opt}(\omega, \nabla_\omega \tilde \L_\epsilon(\tau, \hat u))$
        \hfill \COMMENT {update $\omega$ to minimize $\tilde \L_\epsilon$}
        \ENDWHILE
    \end{algorithmic}
\end{algorithm}

\section{Numerical Examples}
% In this section, we study SAML through numerical experiments of 1D Poisson equations with the Jacobi method and SOR method.
% We observe that the theorem (\cref{eq:guarantee}) numerically holds for a practical neural network and solver, and our initial guesses are optimized depending on the successive solvers.

% The key insight from the theoretical analysis is that meta-solving leverages the properties of
% the solver and adapts the selection of initial conditions to it.
In this section, we show high-performance and versatility of the proposed method (\cref{alg:minimize Leps})
using numerical examples in more complex scenarios,
% \QL{Ref to an algorithm float instead of just general GBMS}
involving different task spaces, different iterative solvers and their parameters.
Only the main results are presented in this section; details are provided in \cref{app:numerical}.
% Also, the details of the training are presented in \cref{app:training}.

\label{sec:applications}
% \subsection{Generating initial guesses}
\subsection{Generating Relaxation Factors}
\label{sec:relaxation factors}
This section presents the application of the proposed method for generating relaxation factors of the SOR method.
The SOR method is an improved version of the Gauss-Seidel method with relaxation factors enabling faster convergence \cite{Saad2003-vm}.
Its performance is sensitive to the choice of the relaxation factor,
but the optimal choice is only accessible in simple cases (e.g. \cite{Yang2009-dl}) and is difficult to obtain beforehand in general.
Thus, we aim to choose a good relaxation factor to accelerate the convergence of the SOR method using \cref{alg:minimize Leps}.
Moreover, leveraging the generality of \cref{alg:minimize Leps}, we generate the relaxation factor and initial guess simultaneously and observe its synergy.

% \paragraph{Problem setting}
We consider solving Robertson equation \cite{Robertson1966-ea},
which is a nonlinear ordinary differential equation that models a certain reaction of three chemicals in the form:
\small
\begin{align}
    \label{eq:Robertson}
    \frac{d}{dt}\left(\begin{array}{l}y_{1} \\ y_{2} \\ y_{3}\end{array}\right)
     & =\left(\begin{array}{l}-c_{1} y_{1}+c_{3} y_{2} y_{3} \\ c_{1} y_{1}-c_{2} y_{2}^{2}-c_{3} y_{2} y_{3} \\ c_{2} y_{2}^{2}\end{array}\right), \\
    \left(\begin{array}{l}y_{1}(0) \\ y_{2}(0) \\ y_{3}(0)\end{array}\right)
     & =\left(\begin{array}{l}1 \\ 0 \\ 0 \end{array}\right),  \quad t \in [0, T],
\end{align}
\normalsize
% where $y_1$, $y_2$, $y_3$ are the concentrations of the chemicals, and $c_1$, $c_2$, $c_3$ are the reaction rate constants.
We write $(y_1, y_2, y_3)^T$ as $y$ and the right hand side of \cref{eq:Robertson} as $f(y)$.
% The Robertson equation with constants $c_1 =0.04$, $c_2 = 3 \cdot 10^7$, $c_3 = 10^4$ is often used as an example of stiff equations,
% which become numerically unstable for certain numerical methods, such as the forward Euler method and the Runge-Kutta method \cite{Saad2003-vm},
% without a very small stepsize.
Since the Robertson equation is stiff, it is solved by implicit numerical methods \cite{Hairer2010-uw}.
For simplicity, we use the backward Euler method:
$y_{n+1} = y_n + h_n f(y_{n+1})$,
% , it is discretized with the implicit Euler scheme:
% \begin{equation}
%     \label{eq:Euler-implicit}
%     y_{n+1} = y_n + h_n f(y_{n+1}),
% \end{equation}
where $h_n$ is the step size.
% Since the solution of the Robertson equation has a quick initial transient followed by a smooth variation \cite{Hairer2010-uw},
% we set the step size $h_n \ (n=1, 2, \ldots, 100)$ so that the evaluation points are located log-uniformly in $[10^{-6}, 10^3]$.
% Note that GBMS can also be applied to more advanced methods, such as adaptive step size methods, in the same manner.
To find $y_{n+1}$, we need to solve the nonlinear algebraic equation
\begin{equation}
    \label{eq:nonlinear-algebraic}
    g_n(y_{n+1}) := y_{n+1} - h_n f(y_{n+1}) - y_n = 0
\end{equation}
at every time step.
For solving (\ref{eq:nonlinear-algebraic}), we use the one-step Newton-SOR method \cite{Saad2003-vm}.
% Its iteration rule is
% \begin{align}
%     J_n(y_{n+1}^{(k)}) & = D_k - L_k - U_k                                          \\
%     \label{eq:Newton-SOR}
%     y_{n+1}^{(k+1)}    & = y_{n+1}^{(k)} - r (D_k - r L_k)^{-1} g_n(y_{n+1}^{(k)}),
% \end{align}
% where $J_n$ is the Jacobian of $g_n$, its decomposition $D_k$, $L_k$, $U_k$ are diagonal, strictly lower triangular, and strictly upper triangular matrices respectively,
% and $r \in [1, 2]$ is the relaxation factor of the SOR method.
% It iterates until the residual of the approximate solution $\norm{g_n(y_{n+1}^{(k)})}$ reaches a given error tolerance $\epsilon$.
% Note that we choose to investigate the Newton-SOR method, because it is a fast and scalable method and applicable to larger problems.
% it may not be as efficient as the Newton method for solving a small problem such as (\ref{eq:nonlinear-algebraic}).　However,
% the GBMS approach is applicable to large problems where iterative methods are more effective.
Our task $\tau$ is to solve (\ref{eq:nonlinear-algebraic}) using the Newton-SOR method,
which is represented by $\tau = \{c_1, c_2, c_3, h_n, y_n \}$.
Note that $\tau$ does not contain the solution of (\ref{eq:nonlinear-algebraic}),
so the training is done in an unsupervised manner.
% Unlike \cref{sec:initial guess}, we consider all possible sequences of approximate solutions during the Newton-SOR iterations as the solution space $\U_\tau$,
% which enables us to build loss functions that account for both accuracy and efficiency.
% because we want to consider not only the accuracy of the solution but also its efficiency.
The loss function $\L_m$ is the residual $\norm{g_n(y_{n+1}^{(m)})}$,
and $\L_\epsilon$ is the number of iterations to have $\L_m < \epsilon$.
% while the loss function in the \cref{sec:initial guess} is the residual after a given number of iterations.
% We will discuss the details of how we define this loss function in the next paragraph.
As for the task distribution, we sample $c_1, c_2, c_3$ log-uniformly from $[10^{-4}, 1], [10^5, 10^9], [10^2, 10^6]$ respectively.
% We prepare 10,000 sets of $c_1, c_2, c_3$ and use 2,500 for training, 2,500 for validation, and 5,000 for test.
The solver $\Phi_{\epsilon, m}$ is the Newton-SOR method and its parameter $\theta$ is a pair of relaxation factor and initial guess $(r, y_{n+1}^{(0)})$. %, so $\Theta = [1, 2]\times \R^3$.
At test time, we set $m={10}^4$ and $\epsilon=10^{-9}$.
% More specifically, writing the right hand side of \cref{eq:Newton-SOR} as $\phi_\tau(y_{n+1}^{(k)}; r)$,
% the solver is written as $\Phi_{\epsilon}(\tau ;r, y_{n+1}^{(0)}) = \phi_\tau^{k_{\epsilon}}(y_{n+1}^{(0)};r)$,
% where $k_{\epsilon}$ is the minimum number of iterations to achieve the given error tolerance $\epsilon$.
% In the numerical experiment, three different tolerances $\epsilon=10^{-6}, 10^{-9}, 10^{-12}$ are used.
We compare four meta-solvers $\Psi_{\text{base}}$, $\Psi_{\text{ini}}$, $\Psi_{\text{relax}}$, and $\Psi_{\text{both}}$.
The meta-solver $\Psi_{\text{base}}$ has no parameters,
while $\Psi_{\text{ini}}$, $\Psi_{\text{relax}}$, and $\Psi_{\text{both}}$ are implemented by fully-connected neural networks with weight $\omega$.
The meta-solver $\Psi_{\text{base}}$ is a classical baseline,
which uses the previous time step solution $y_n$ as an initial guess $y_{n+1}^{(0)}$ and a constant relaxation factor $r_\text{base}$.
% so $\Psi_{\text{base}}(\tau) = (r_\text{base}, y_n)$.
The constant relaxation factor is $r_\text{base}=1.37$, which does not depend on task $\tau$
and is chosen by the brute force search to minimize the average number of iterations
to reach target tolerance $\epsilon = 10 ^ {-9}$ in the whole training data. % by using the grid search in $[1, 2]$ with $0.01$ grid width.
% Note that $r_\text{base}$ is already a good choice compared to other constant values.
% For example, $r=1$ requires about $1.7$ times iterations compared to $r_\text{base}$, and $r=1.5$ leads divergence in solving some problem instances.
The meta-solver $\Psi_{\text{ini}}$ generates an initial guess $y_\tau$ adaptively,
but uses the constant relaxation factor $r_\text{base}$.
% so $\Psi_{\text{ini}}(\tau) = (r_\text{base}, y_\tau)$.
The meta-solver $\Psi_{\text{relax}}$ generates a relaxation factor $r_\tau \in [1, 2]$ adaptively
but uses the previous time step solution as an initial guess.
% so $\Psi_{\text{relax}}(\tau) = (r_\tau, y_n)$.
The meta-solver $\Psi_{\text{both}}$ generates the both adaptively.
% so $\Psi_{\text{relax}}(\tau) = (r_\tau, y_\tau)$.
Then, the meta-solvers are trained by GBMS with formulations (\ref{eq:min res}) and (\ref{eq:surrogate})
depending on the solver $\Phi_{\epsilon, m}$.
The results are presented in \cref{tab:relax}.
% and \cref{tab:different phi} compares the performance of $\Psi_{\text{both}}$ with different error tolerances.
The significant advantage of the proposed approach (\ref{eq:surrogate}) over the baselines and the current approach (\ref{eq:min res})
is observed in \cref{tab:relax different solvers}. % and \cref{tab:different phi}.
% All the trained meta-solvers are better than the baseline,
The best meta-solver $\Psi_{\text{both}}$ with $\Phi_{10^{-9}, 2000}$ reduces the number of iterations by 87\% compared to the baseline,
while all the meta-solvers trained in the current approach (\ref{eq:min res}) fail to outperform the baseline.
This is because the relaxation factor that minimizes the number of iterations for a given tolerance can be
very different from the relaxation factor that minimizes the residual at a given number of iterations.
% Moreover, \cref{tab:different phi} shows that the proposed method can learn not only the characteristics of the target equation but also the solver configurations.
% In \cref{tab:relax different meta-solvers}, we can observe that the meta-solver performs the best on the diagonal where the error tolerance used in training and testing is the same.
% In other words, the best relaxation factors that minimize the number of iterations are different depending on error tolerances,
% and meta-solvers successfully produce good relaxation factors according to each error tolerance.
% This adaptivity to solvers is an important property of GBMS.
Furthermore, \cref{tab:relax different meta-solvers} implies that simultaneously generating the initial guess and relaxation factor can create synergy.
This follows from the observation that
the degree of improvement by $\Psi_{\text{both}}$ from $\Psi_{\text{ini}}$ and $\Psi_{\text{relax}}$ (83\% and 29\%)
are greater than that by $\Psi_{\text{relax}}$ and $\Psi_{\text{ini}}$ from $\Psi_{\text{base}}$ (80\% and 20\%).

\begin{table}[hbtp]
    \vskip -0.2in
    {   % \footnotesize
    % \captionsetup{position=top} %<- Needed for using subtables created with the subfig package
    \caption{The number of iterations of the Newton-SOR method.}
    % Boldface indicates best performance for each column.}
    % Standard deviations are presented in \cref{sup tab:relax toy standard}}
    \label{tab:relax}
    \small
    \begin{center}
        \subtable[Different training settings.]{
            \label{tab:relax different solvers}
            \begin{tabular}{lll|rrr}
                $\Psi$                 & $\epsilon$ & $m$  & $\L_\epsilon$ \\
                \hline
                $\Psi_{\mathrm{base}}$ & -          & -    & 70.51         \\
                $\Psi_{\mathrm{both}}$ & -          & 1    & 199.20        \\
                                       & -          & 5    & 105.71        \\
                                       & -          & 25   & 114.43        \\
                                       & -          & 125  & 109.24        \\
                                       & -          & 625  & 190.56        \\
                (ours)                 & $10^{-9}$  & 2000 & \textbf{9.51} \\
            \end{tabular}}  \quad
        \subtable[Different meta-solvers.]{
            \label{tab:relax different meta-solvers}
            \begin{tabular}{l|rrr}
                $\Psi$                  & $\L_\epsilon$ \\
                \hline
                $\Psi_{\mathrm{base}}$  & 70.51         \\
                $\Psi_{\mathrm{ini}}$   & 55.82         \\
                $\Psi_{\mathrm{relax}}$ & 14.38         \\
                $\Psi_{\mathrm{both}}$  & \textbf{9.51} \\
            \end{tabular}}

    \end{center}
    }
    \vskip -0.3in
\end{table}

\subsection{Generating Preconditioning Matrices}
\label{sec:preconditioning}
In this section, \cref{alg:minimize Leps} is applied to preconditioning of the Conjugate Gradient (CG) method.
The CG method is an iterative method that is widely used in solving large sparse linear systems,
especially those with symmetric positive definite matrices.
Its performance is dependent on the condition number of the coefficient matrix,
and a variety of preconditioning methods %, including learning-based ones,
have been proposed to improve it.
In this section, we consider ICCG($\alpha$),
which is the CG method with incomplete Cholesky factorization (ICF) preconditioning with diagonal shift $\alpha$.
In ICCG($\alpha$), the preconditioning matrix $M = \tilde L \tilde L^T$ is obtained by applying ICF to $A + \alpha I$ instead of $A$.
If $\alpha$ is small, $A + \alpha I$ is close to $A$ but the decomposition $\tilde L \tilde L^T \approx A+ \alpha I$ may have poor quality or even fail.
If $\alpha$ is large, the decomposition $\tilde L \tilde L^T \approx A+ \alpha I$ may have good quality but $A + \alpha I$ is far from the original matrix $A$.
Although $\alpha$ can affect the performance of ICCG($\alpha$), there is no well-founded solution to how to choose $\alpha$ for a given $A$ \cite{Saad2003-vm}.
Thus, we train a meta-solver to generate $\alpha$ depending on each problem instance.

% \paragraph{Propblem setting}
We consider linear elasticity equations:
\small
\begin{align}
    -\nabla \cdot \sigma(u) & =f \quad \text { in } B                                     \\
    \sigma(u)               & =\lambda \operatorname{tr}(\epsilon(u)) I+2 \mu \epsilon(u) \\
    \epsilon(u)             & =\frac{1}{2}\left(\nabla u+(\nabla u)^T\right)
\end{align}
\normalsize
where $u$ is the displacement field, $\sigma$ is the stress tensor, $\epsilon$ is strain-rate tensor,
$\lambda$ and $\mu$ are the Lam\'{e} elasticity parameters, and $f$ is the body force.
Specifically, we consider clamped beam deformation under its own weight.
The beam is clamped at the left end, i.e. $u = (0, 0, 0)^T$ at $x=0$, and has length $1$ and a square cross-section of width $W$.
The force $f$ is gravity acting on the beam, i.e. $f = (0, 0, -\rho g)^T$,
where $\rho$ is the density and $g$ is the acceleration due to gravity.
The model is discretized using the finite element method with $8\times 8 \times 2$ cuboids mesh,
and the resulting linear system $Au=f$ has $N=243$ unknowns.
Our task $\tau$ is to solve the linear system $Au=f$,
and it is represented by $\tau = \{ W_\tau, \lambda_\tau, \mu_\tau, \rho_\tau\}$.
% The dataset is $D_\tau = \{ W_\tau, \lambda_\tau, \mu_\tau, \rho_\tau\}$,
Each pysical parameter is sampled from $W_\tau \sim \mathrm{LogUniform}(0.003, 0.3)$, $\lambda_\tau, \mu_\tau, \rho_\tau \sim \mathrm{LogUniform}(0.1, 10)$,
which determine the task space $(\T, P)$.
Our solver $\Phi_{\epsilon, m}$ is the ICCG method and its parameter is diagonal shift $\alpha$.
At test time, we set $m=N=243$, the coefficient matrix size, and $\epsilon=10^{-6}$.
We compare three meta-solvers $\Psi_{\text{base}}, \Psi_{\text{best}}$ and $\Psi_{\text{nn}}$.
The meta-solver $\Psi_{\text{base}}$ is a baseline that provides a constant $\alpha = 0.036$, which is the minimum $\alpha$ that succeeds ICF for all training tasks.
The meta-solver $\Psi_{\text{best}}$ is another baseline that generates the optimal $\alpha_\tau$ for each task $\tau$ by the brute force search.
The meta-solver $\Psi_{\text{nn}}$ is a fully-connected neural network that takes $\tau$ as inputs and outputs $\alpha_\tau$ for each $\tau$.
The loss function $\L_m$ is the relative residual $\L_m = \norm{A\hat u^{(m)} -f}/\norm{f}$,
and the $\L_\epsilon$ is the number of iterations to achieve target tolerance $\epsilon=10^{-6}$.
In addition, we consider a regularization term $\L_\mathrm{ICF}(\tau; \omega) = - \log \alpha_\tau$ to penalize the failure of ICF.
By combining them, we use $\mathbb{1}_{\text{success}}\L_m + \gamma\mathbb{1}_{\text{fail}} \L_{\mathrm{ICF}}$ for training with $\Phi_m$
and $\mathbb{1}_{\text{success}}\tilde \L_\epsilon + \gamma\mathbb{1}_{\text{fail}} \L_{\mathrm{ICF}}$ for training with $\Phi_\epsilon$,
where $\gamma$ is a hyperparameter that controls the penalty for the failure of ICF,
and $\mathbb{1}_{\text{success}}$ and $\mathbb{1}_{\text{fail}}$ indicates the success and failure of ICF respectively.
% We use a custom loss function $\tilde \L_\epsilon$ that penalizes the failure of ICF and the number of iterations:
% \begin{align}
%     \tilde \L_\epsilon(\tau; \omega) = \begin{cases}
%                                          -\gamma \log \alpha_\tau             & \text{if ICF fails} \\
%                                          \tilde \L_\epsilon^{(N)}(\tau; \omega) & \text{otherwise}
%                                      \end{cases},
% \end{align}
% where $\gamma$ is a hyperparameter that controls the penalty for the failure of ICF.
Then, the meta-solvers are trained by GBMS with formulations (\ref{eq:min res}) and (\ref{eq:surrogate})
depending on the solver $\Phi_{\epsilon, m}$.

% \paragraph{Result}
\cref{tab:iccg} shows the number of iterations of ICCG with diagonal shift $\alpha$ generated by the trained meta-solvers.
For this problem setting, only one meta-solver trained by the current approach with the best hyper-parameter ($m=125$) outperforms the baseline.
All the other meta-solvers trained by the current approach increase the number of iterations and even fail to converge for the majority of problem instances.
This may be because the error of $m$-th step solution $\L_m$ is less useful for the CG method.
Residuals of the CG method do not decrease monotonically, but often remain high and oscillate for a while, then decrease sharply.
Thus, $\L_m$ is not a good indicator of the performance of the CG method.
This property of the CG method makes it more difficult to learn good $\alpha$ for the current approach.
On the other hand, the meta-solver trained by our approach converges for most instances and reduces the number of iterations
by 55\% compared to $\Psi_{\text{base}}$ and 33\% compared to the best-tuned current approach ($m=125$) without the hyper-parameter tuning.
Furthermore, the performance of the proposed approach is close to the optimal choice $\Psi_{\text{best}}$.

\begin{table}[hbtp]
    \vskip -0.2in
    {   \caption{The number of iterations of ICCG.
        If ICF fails, it is counted as the maximum number of iterations $N=243$.}
    \label{tab:iccg}
    \vskip 0.15in
    \small
    \begin{center}
        \begin{tabular}{lll|rrr}

            $\Psi$                 & $\epsilon$ & $m$ & $\L_\epsilon$  & convergence ratio \\
            \hline
            $\Psi_{\mathrm{base}}$ & -          & -   & 115.26         & 0.864             \\
            $\Psi_{\mathrm{best}}$ & -          & -   & 44.53          & 1.00              \\
            $\Psi_{\mathrm{both}}$ & -          & 1   & 204.93         & 0.394             \\
                                   & -          & 5   & 183.90         & 0.626             \\
                                   & -          & 25  & 193.80         & 0.410             \\
                                   & -          & 125 & 77.08          & \textbf{1.00}     \\
                                   & -          & 243 & 201.65         & 0.420             \\
            (ours)                 & $10^{-6}$  & 243 & \textbf{52.02} & \textbf{0.999}    \\

            % $\Psi$                 & $\Phi$ (train)   & $\L_\epsilon$ \\
            % \hline
            % $\Psi_{\mathrm{base}}$ & -                & 115.26        \\
            % $\Psi_{\mathrm{best}}$ & -                & 44.53         \\
            % $\Psi_{\mathrm{nn}}$   & $\Phi_{{1}}$     & bad           \\
            %                        & $\Phi_{{5}}$     & bad           \\
            %                        & $\Phi_{{25}}$    & bad           \\
            %                        & $\Phi_{{125}}$   & bad           \\
            %                        & $\Phi_{{243}}$   & bad           \\
            %                        & $\Phi_{10^{-6}}$ & 49.20         \\
        \end{tabular}
    \end{center}
    }
    \vskip -0.3in
\end{table}

\section{Conclusion}
In this paper, we proposed a formulation of meta-solving as a general framework to analyze and develop
learning-based iterative numerical methods.
Under the framework, we identify limitations of current approaches directly based on meta-learning,
and make concrete the mechanisms specific to scientific computing resulting in its failure.
This is supported by both numerical and theoretical arguments.
The understanding then leads to a simple but novel training approach
that alleviates this issue.
In particular, we proposed a practical surrogate loss function for directly minimizing the expected
computational overhead, and demonstrated the high-performance and versatility of the proposed method
through a range of numerical examples.
Our analysis highlights the importance of precise formulations and the necessity of modifications
when adapting data-driven workflows to scientific computing.
\nocite{langley00}

\clearpage
\bibliography{example_paper}
\bibliographystyle{unsrt}

%%%%%%%%%%%%%%%%%%%%%%%%%%%%%%%%%%%%%%%%%%%%%%%%%%%%%%%%%%%%%%%%%%%%%%%%%%%%%%%
%%%%%%%%%%%%%%%%%%%%%%%%%%%%%%%%%%%%%%%%%%%%%%%%%%%%%%%%%%%%%%%%%%%%%%%%%%%%%%%
% APPENDIX
%%%%%%%%%%%%%%%%%%%%%%%%%%%%%%%%%%%%%%%%%%%%%%%%%%%%%%%%%%%%%%%%%%%%%%%%%%%%%%%
%%%%%%%%%%%%%%%%%%%%%%%%%%%%%%%%%%%%%%%%%%%%%%%%%%%%%%%%%%%%%%%%%%%%%%%%%%%%%%%
\newpage
\appendix
\onecolumn
\section{Organizing related works under meta-solving framework}
\label{app:examples}

\begin{example}[\cite{Feliu-Faba2020-yj}]
    \label{ex:wavelet}
    The authors in \cite{Feliu-Faba2020-yj} propose the neural network architecture with meta-learning approach
    that solves the equations in the form $\mathcal L_{\eta} u(x)=f(x)$ with appropriate boundary conditions,
    where $\mathcal L_\eta$ is a partial differential or integral operator parametrized by a parameter function $\eta(x)$.
    This work can be described in our framework as follows:
    \begin{itemize}
        \item Task $\tau$: The task $\tau$ is to solve a $\mathcal L_{\eta} u(x)=f(x)$ for $\eta = \eta{_\tau}$:
              \begin{itemize}
                  \item Dataset $D_\tau$: The dataset $D_{\tau}$ is $D_{\tau} = \{\eta_{\tau}, f_{\tau}, u_{\tau}\}$,
                        where $\eta_{\tau}, f_{\tau}, u_{\tau} \in \R^N$ are the parameter function, right hand side, and solution respectively.
                        % The reference solution $u_{\tau}$ is obtained by [].
                  \item Solution space $\mathcal U_\tau$: The solution space $\U_\tau$ is a subset of $\R^N$ for $N \in \N$.
                  \item Loss function $L_\tau$: The loss function $L_\tau:\U \to \R_{\geq 0}$ is the mean squared error with the reference solution,
                        i.e. $L_{\tau}(\hat u) = \norm{u_{\tau} - \hat u}^2$.
              \end{itemize}
        \item Task space $(\T, P)$: The task distribution $(\T, P)$ is determined by the distribution of $\eta_\tau$ and $f_\tau$.
        \item Solver $\Phi$: The solver $\Phi: \T \times \Theta \to \U_\tau$ is implemented by a neural network imitating the wavelet transform,
              which is composed by three modules with weights $\theta = (\theta_1, \theta_2, \theta_3)$.
              In detail, the three modules, $\phi_1(\cdot; \theta_1)$, $\phi_2(\cdot; \theta_2)$, and $\phi_3(\cdot; \theta_3)$,
              represent forward wavelet transform, mapping $\eta$ to coefficients matrix of the wavelet transform, and inverse wavelet transform respectively.
              Then, using the modules, the solver $\Phi$ is represented by $\Phi(\tau;\theta) = \phi_3((\phi_2(\eta_{\tau}; \theta_2) \phi_1(f_{\tau}; \theta_1)); \theta_3) = \hat u$.
        \item Meta-solver $\Psi$: The meta-solver $\Psi:\T \times \Omega \to \Theta$ is the constant function that returns its parameter $\omega$,
              so $\Psi(\tau; \omega) =\omega = \theta$ and $\Omega = \Theta$.
              Note that $\theta$ does not depend on $\tau$ in this example.
    \end{itemize}
    % Then, the weights $\omega = \theta$ is optimized by a gradient-based algorithm as described in \cref{alg:general}.
\end{example}

\begin{example}[\cite{Psaros2022-ym}]
    \label{ex:PINN}
    In \cite{Psaros2022-ym}, meta-learning is used for learning a loss function of the physics-informed neural network, shortly PINN \cite{Raissi2019-xt}.
    The target equations are the following:
    \begin{align}
        \mathcal{F}_{\lambda}[u](t, x) & =0,(t, x) \in[0, T] \times \mathcal D  \tag{a}\label{eq:a}         \\
        \mathcal{B}_{\lambda}[u](t, x) & =0,(t, x) \in[0, T] \times \partial \mathcal D \tag{b}\label{eq:b} \\
        u(0, x)                        & =u_{0, \lambda}(x), x \in \mathcal D \tag{c}\label{eq:c},
    \end{align}
    where $\mathcal D \subset \mathbb{R}^M$ is a bounded domain, $u:[0, T] \times \mathcal D \to \mathbb{R}^N$ is the solution,
    $\mathcal{F}_{\lambda}$ is a nonlinear operator containing differential operators,
    $\mathcal{B}_{\lambda}$ is a operator representing the boundary condition,
    $u_{0, \lambda}:\mathcal D \to \mathbb{R}^N$ represents the initial condition,
    and $\lambda$ is a parameter of the equations.

    \begin{itemize}
        \item Task $\tau$: The task $\tau$ is to solve a differential equation by PINN:
              \begin{itemize}
                  \item Dataset $D_\tau$: The dataset $D_{\tau}$ is the set of points $(t, x) \in[0, T] \times \mathcal D$ and the values of $u$ at the points if applicable.
                        In detail, $D_{\tau} = D_{f, \tau} \cup D_{b, \tau} \cup D_{u_0, \tau} \cup D_{u, \tau}$, where $D_{f, \tau}, D_{b, \tau}$, and $D_{u_0, \tau}$ are sets of points corresponding to the equation \cref{eq:a}, \cref{eq:b}, and \cref{eq:c} respectively.
                        $D_{u, \tau}$ is the set of points $(t,x)$ and observed values $u(t,x)$ at the points. %, i.e. $D_{u, \tau} = \{ (t, x, u(t,x)) \}$
                        In addition, each dataset $D_{\cdot, \tau}$ is divided into training set $D_{\cdot, \tau}^{\text{train}}$ and validation set $D_{\cdot, \tau}^{\text{val}}$.
                  \item Solution space $\mathcal U_\tau$: The solution space $\U_\tau$ is the weights space of PINN.
                        % Since the approximated solution $\hat u$ is represented by the PINN, the space of its weights is considered as $\mathcal U$.
                  \item Loss function $L_\tau$: The loss function $L_\tau:\U \to \R_{\geq 0}$ is based on the evaluations at the points in $D_{\tau}^{\text{val}}$.
                        In detail,
                        $$L_{\tau}(\hat u) = L_{\tau}^{\text{val}}(\hat u) =L_{f, \tau}^{\text{val}}(\hat u) + L_{b, \tau}^{\text{val}}(\hat u)+ L_{u_0, \tau}^{\text{val}}(\hat u),$$
                        where
                        $$
                            \begin{aligned}
                                {L}_{f, \tau}^{\text{val}}     & =\frac{w_{f}}{|D_{f, \tau}|} \sum_{(t,x) \in D_{f, \tau}} \ell\left(\mathcal{F}_{\lambda}[\hat u](t, x), \mathbf 0\right) \\
                                {L}_{b, \tau}^{\text{val}}     & =\frac{w_{b}}{|D_{b, \tau}|} \sum_{(t,x) \in D_{b, \tau}} \ell\left(\mathcal{B}_{\lambda}[\hat u](t, x), \mathbf 0\right) \\
                                {L}_{u_{0}, \tau}^{\text{val}} & =\frac{w_{u_{0}}}{|D_{u_0, \tau}|} \sum_{(t,x) \in D_{u_0, \tau}} \ell\left(\hat{u}(0, x), u_{0, \lambda}(x)\right),
                            \end{aligned}
                        $$
                        and $\ell: \mathbb{R}^N \times \mathbb{R}^N \to \mathbb{R}_{\geq 0}$ is a function.
                        In the paper, the mean squared error is used as $\ell$.
              \end{itemize}
        \item Task space $(\T, P)$: The task distribution $(\T, P)$ is determined by the distribution of $\lambda$.
        \item Solver $\Phi$: The solver $\Phi: \T \times \Theta \to \U_\tau$ is the gradient descent for training the PINN.
              The parameter $\theta \in \Theta$ controls the objective of the gradient descent,
              $L_{\tau}^{\text{train}}(\hat u; \theta) =L_{f, \tau}^{\text{train}}(\hat u; \theta) + L_{b, \tau}^{\text{train}}(\hat u; \theta)+ L_{u_0, \tau}^{\text{train}}(\hat u; \theta) + L_{u, \tau}^{\text{train}}(\hat u; \theta)$,
              where the difference from $L_{\tau}^{\text{val}}$ is that parametrized loss $\ell_\theta$ is used in $L_{\tau}^{\text{train}}$ instead of the MSE in $L_{\tau}^{\text{val}}$.
              Note that the loss weights $w_f, w_b, w_{u_0}, w_u$ in $L_{\tau}^{\text{train}}$ are also considered as part of the parameter $\theta$.
              %can be different from those in $L_{\tau}^{\text{val}}$ and be
              In the paper, two designs of $\ell_\theta$ are studied.
              One is using a neural network, and the other is using a learned adaptive loss function.
              In the former design, $\theta$ is the weights of the neural network, and in the latter design, $\theta$ is the parameter in the adaptive loss function.
        \item Meta-solver $\Psi$: The meta-solver $\Psi:\T \times \Omega \to \Theta$ is the constant function that returns its parameter $\omega$, so $\Psi(\tau; \omega) =\omega = \theta$ and $\Omega = \Theta$.
              Note that $\theta$ does not depend on $\tau$ in this example.
    \end{itemize}
    % Then, the parameter $\omega = \theta$ is optimized by a gradient-based algorithm as described in \cref{alg:general}.
\end{example}

\begin{example}[\cite{Yonetani2021-ez}]
    \label{ex:A*}
    In \cite{Yonetani2021-ez}, the authors propose a learning-based search method, called Neural A*, for path plannning.
    This work can be described in our framework as follows:
    \begin{itemize}
        \item Task $\tau$: The task $\tau$ is to solve a point-to-point shortest path problem on a graph $G=(V, E)$.
              \begin{itemize}
                  \item Dataset $D_\tau$: The dataset $D_{\tau}$ is $\{G_\tau, v_{\tau, s}, v_{\tau, g}, p_\tau\}$,
                        where $G_\tau = (V_\tau, E_\tau)$ is a graph (i.e. environmental map of the task), $v_{\tau, s} \in V_\tau$ is a starting point,
                        $v_{\tau, g} \in V_\tau$ is a goal, and $p_\tau$ is the ground truth path from $v_{\tau, s}$ to $v_{\tau, g}$.
                  \item Solution space $\mathcal U_\tau$: The solution space $\U_\tau$ is $\{0, 1\}^{V_\tau}$.
                  \item Loss function $L_\tau$: The loss function $L_\tau$ is $L_{\tau}(\hat p) = \norm{\hat p - p}_1 / |V_\tau|$,
                        where $\hat p$ is the search history by the A* algorithm.
                        Note that the loss function considers not only the final solution but also the search history
                        to improve the efficiency of the node explorations.
              \end{itemize}
        \item Task space $(\T, P)$: The task distribution $(\T, P)$ is determined according to each problem.
              The paper studies both synthetic and real world datasets.
        \item Solver $\Phi$: The solver $\Phi: \T \times \Theta \to \U_\tau$ is the differentiable A* algorithm proposed by the authors,
              which takes $D_{\tau} \setminus \{p_\tau\}$ and the guidance map $\theta_\tau \in \Theta = [0, 1]^{V_\tau}$
              that imposes a cost to each node $v \in V_\tau$,
              and returns the search history $\hat p$ containing the solution path.
              % The guidance map $\theta_\tau$ can be considered as the parameter of $\Phi$.
        \item Meta-solver $\Psi$: The meta-solver $\Psi:\T \times \Omega \to \Theta$ is 2D U-Net with weights $\omega \in \Omega$,
              which takes $D_{\tau} \setminus \{p_\tau\}$ as its input and returns the guidance map $\theta_\tau$ for the A* algorithm $\Phi$.
    \end{itemize}
    % Then, the weights $\omega$ is optimized by a gradient-based algorithm as described in \cref{alg:general}.

\end{example}

\begin{example}[\cite{Cheng2021-ua}]
    \label{ex:mg}
    The target equation in \cite{Cheng2021-ua} is a linear systems of equations $A_\eta u = f$ obtained by discretizing parameterized steady-state PDEs, where $u, f \in \mathbb{R}^N$ and $A_\eta \in \mathbb{R}^{N\times N}$ is determined by $\eta$, a parameter of the original equation.
    This work can be described in our framework as follows:
    \begin{itemize}
        \item Task $\tau$: The task $\tau$ is to solve a linear system $A_\eta u = f$ for $\eta_ = \eta_\tau$:
              \begin{itemize}
                  \item Dataset $D_\tau$: The dataset $D_{\tau}$ is $\{ \eta_{\tau}, f_{\tau}\}$.
                  \item Solution space $\mathcal U_\tau$: The solution space $\U_\tau$ is $\R^N$.
                  \item Loss function $L_\tau$: The loss function $L_\tau:\U \to \R_{\geq 0}$ is an unsupervised loss based on the residual of the equation,
                        $L_{\tau}(\hat u) = {\norm{f_{\tau} - A_{\eta_{\tau}}\hat u}^2} / {\norm{f_{\tau}}^2}$.
              \end{itemize}
        \item Task space $(\T, P)$: The task distribution $(\T, P)$ is determined by the distribution of $\eta_\tau$ and $f_\tau$.
        \item Solver $\Phi$: The solver $\Phi: \T \times \Theta \to \U$ is iterations of a function $\phi_{\tau}(\cdot;\theta):\mathcal U \to \mathcal U$ that represents an update step of the multigrid method.
              $\phi_{\tau}$ is implemented using a convolutional neural network and its parameter $\theta$ is the weights corresponding to the smoother of the multigrid method.
              Note that weights of $\phi_{\tau}$ other than $\theta$ are naturally determined by $\eta_{\tau}$ and the discretization scheme.
              In addition, $\phi_{\tau}$ takes $f_{\tau}$ as part of its input at every step, but we write these dependencies as $\phi_{\tau}$ for simplicity.
              To summarize, $\Phi(\tau; \theta) = \phi_{\tau}^k(u^{(0)}; \theta) = \hat u$, where $k$ is the number of iterations of the multigrid method and $u^{(0)}$ is initial guess, which is $\mathbf 0$ in the paper.
        \item Meta-solver $\Psi$: The meta-solver $\Psi:\T \times \Omega \to \Theta$ is implemented by a neural network with weights $\omega$, which takes $A_{\eta_{\tau}}$ as its input and returns weights $\theta_{\tau}$ that is used for the smoother inspired by the subspace correction method.
    \end{itemize}
    % Then, $\omega$ is optimized by a gradient-based algorithm with the number of multigrid iteration $k = 1$ as described in \cref{alg:general}.
\end{example}

\begin{example}[\cite{Um2020-zx}]
    \label{ex:SIL}
    In \cite{Um2020-zx}, initial guesses for the Conjugate Gradient (CG) solver are generated by using a neural network.
    This work can be described in our framework as follows:
    \begin{itemize}
        \item Task $\tau$: The task $\tau$ is to solve a pressure Poisson equation $\nabla \cdot \nabla p=\nabla \cdot v$ on 2D,
              where $p \in \R^{d_x \times d_y}$ is a pressure field and $v \in \R^{2\times d_x \times d_y}$ is a velocity filed:
              \begin{itemize}
                  \item Dataset $D_\tau$: The dataset $D_{\tau}$ is $D_{\tau} = \{v_\tau \}$,
                        where $v_\tau$ is a given velocity sample.
                  \item Solution space $\mathcal U_\tau$: The solution space $\U_\tau$ is $\R^{d_x \times d_y}$.
                  \item Loss function $L_\tau$: The loss function $L_\tau$ is $L_{\tau}(\hat p) = \norm{\hat p - p^{(0)}}^2$,
                        where $\hat p$ is the approximate solution of the Poisson equation by the CG solver, and $p^{(0)}$ is the initial guess.
              \end{itemize}
        \item Task space $(\T, P)$: The task distribution $(\T, P)$ is determined by the distribution of $v_\tau$.
        \item Solver $\Phi$: The solver $\Phi_k: \T \times \Theta \to \U_\tau$ is the differentiable CG solver with $k$ iterations.
              Its parameter $\theta \in \Theta$ is the initial guess $p^{(0)}$, so $\Phi_k(\tau; \theta) = \hat p$.
        \item Meta-solver $\Psi$: The meta-solver $\Psi:\T \times \Omega \to \Theta$ is 2D U-Net with weights $\omega \in \Omega$,
              which takes $v_\tau$ as its input and returns the initigal guess $p^{(0)}_{\tau}$ for the CG solver.
    \end{itemize}
    % Then, the weights $\omega$ is optimized by a gradient-based algorithm as described in \cref{alg:general}.

\end{example}

\section{Proofs in \texorpdfstring{\cref{sec:current approach}}{}}
\label{app:proofs}
We first present a lemma about the eigenvalues and eigenvectors of $A$ and the corresponding Jacobi iteration matrix $M=I-\frac{1}{2}A$.

\begin{lemma}
    \label{lem:eigen}
    The eigenvalues of $A$ and $M$ are $\mu_i = 2 - 2 \cos \frac{i}{N+1}\pi$ and $\lambda_i = \cos \frac{i}{N+1}\pi$ for $i=1,2,\ldots,N$ respectively.
    Their common corresponding eigenvectors are $v_i = (\sin\frac{1}{N+1}i\pi, \sin\frac{2}{N+1}i\pi, \ldots, \sin\frac{N}{N+1}i\pi)^T$.
\end{lemma}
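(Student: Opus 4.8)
The plan is to verify the eigenvector claim for $A$ by a direct computation and then transfer it to $M$ via the identity $M = I - \tfrac12 A$. Recall that the finite difference discretization of $-\tfrac{d^2}{dx^2}$ on $[0,1]$ with homogeneous Dirichlet conditions and $N$ interior nodes produces the (unscaled) tridiagonal matrix $A = \mathrm{tridiag}(-1,2,-1) \in \R^{N\times N}$, so that $(Aw)_j = 2w_j - w_{j-1} - w_{j+1}$ under the endpoint convention $w_0 = w_{N+1} = 0$ — this normalization is the one consistent with the stated eigenvalues. Writing $\theta_i = \tfrac{i}{N+1}\pi$ and $v_i = (\sin\theta_i, \sin 2\theta_i, \ldots, \sin N\theta_i)^T$, I would first compute $(Av_i)_j$ for an interior index $1 < j < N$ using the product-to-sum identity $\sin((j-1)\theta_i) + \sin((j+1)\theta_i) = 2\cos\theta_i \sin(j\theta_i)$, which gives $(Av_i)_j = (2 - 2\cos\theta_i)\sin(j\theta_i) = \mu_i (v_i)_j$.

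Next I would handle the two boundary rows $j=1$ and $j=N$ separately. For $j=1$ the "missing" neighbour corresponds to $w_0 = \sin 0 = 0$, and for $j=N$ it corresponds to $w_{N+1} = \sin((N+1)\theta_i) = \sin(i\pi) = 0$; in both cases the same trigonometric identity applies with a genuinely vanishing term, so again $(Av_i)_j = \mu_i (v_i)_j$. Hence $Av_i = \mu_i v_i$ for every $i = 1,\dots,N$. Because $\cos$ is strictly decreasing on $(0,\pi)$, the numbers $\mu_i = 2 - 2\cos\theta_i$ are pairwise distinct, so the $v_i$ form a full eigenbasis and $\{\mu_i\}$ is exactly the spectrum of $A$.

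Finally, since $M = I - \tfrac12 A$, each $v_i$ satisfies $M v_i = \left(1 - \tfrac12 \mu_i\right) v_i = \bigl(1 - (1 - \cos\theta_i)\bigr) v_i = \cos(\theta_i)\, v_i$, giving $\lambda_i = \cos\tfrac{i}{N+1}\pi$ with the same eigenvectors, as claimed. There is no genuine obstacle here; the only point needing care is the bookkeeping at the two boundary indices, i.e. checking that the convention $w_0 = w_{N+1} = 0$ is precisely what lets the identity close up, using that $(N+1)\theta_i = i\pi$ so $\sin((N+1)\theta_i) = 0$.
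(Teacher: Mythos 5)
Your proof is correct. The paper's own ``proof'' of this lemma is a one-line citation to Section~9.1.1 of Greenbaum's textbook, so there is no in-paper argument to compare against; what you have written is the standard direct verification that the citation implicitly points to. Your bookkeeping is right: the product-to-sum identity $\sin((j-1)\theta_i)+\sin((j+1)\theta_i)=2\cos\theta_i\sin(j\theta_i)$ handles every row uniformly once you note $\sin(0\cdot\theta_i)=0$ and $\sin((N+1)\theta_i)=\sin(i\pi)=0$, the distinctness of $\cos\theta_i$ on $(0,\pi)$ gives the full spectrum, and the affine relation $M=I-\tfrac12 A$ (which comes from $D=2I$ in the Jacobi splitting) transfers the eigenpairs with $\lambda_i = 1-\tfrac12\mu_i=\cos\theta_i$. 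Nothing is missing; if anything, spelling out the proof is more informative than the paper's bare reference.
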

\begin{proof}[Proof of \cref{lem:eigen}]
    The proof is presented in Section 9.1.1 of \cite{Greenbaum1997-mv}.
\end{proof}

We can write down the loss functions, $\L_m$ and $\L_\epsilon$, and find their minimiziers.
\begin{proposition} [Minimizer of (\ref{eq:min res})]
    \label{lem:write down}
    (\ref{eq:min res}) is written as
    \begin{equation}
        \label{eq:res_exp}
        \min_\omega\ p c_1^2\left(\omega \mu_1-1\right)^2 \lambda_1^{2 m}
        +(1-p) c_2^2\left(\omega \mu_2-1\right)^2 \lambda_2^{2 m},
    \end{equation}
    and it has the unique minimizer
    \begin{equation}
        \omega_m = \frac{p c_1^2 \mu_1 \lambda_1^{2 m}+(1-p) c_2^2 \mu_2 \lambda_2^{2 m}}{p c_1^2 \mu_1^2 \lambda_1^{2 m}+(1-p) c_2^2 \mu_2^2 \lambda_2^{2 m}}.
    \end{equation}
    Furthermore, for any $p \in (0, 1)$, we have
    \begin{equation}
        \label{eq:res_limit}
        \lim_{m\to \infty}\omega_m = \frac{1}{\mu_1}.
    \end{equation}
\end{proposition}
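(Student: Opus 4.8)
The plan is to diagonalize the Jacobi iteration, reduce $\E_{\tau\sim P}[\L_m]$ to an explicit univariate quadratic in $\omega$, and then read off its minimizer and the $m\to\infty$ limit. First I would recall that for the tridiagonal $A$ the Jacobi iteration matrix is $M = I - \tfrac12 A$, that its fixed point is $A^{-1}f_i$, and hence that the error $e^{(k)} := \hat u^{(k)} - A^{-1}f_i$ obeys $e^{(k)} = M^k e^{(0)}$. Using the meta-solver's output $\hat u_i^{(0)} = \omega f_i$ together with $f_i = c_i\mu_i v_i$ and $A^{-1}v_i = \mu_i^{-1}v_i$, the initial error is $e^{(0)} = c_i(\omega\mu_i - 1)v_i$, which lies in the $v_i$-eigenspace of $M$. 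By \cref{lem:eigen} we have $Mv_i = \lambda_i v_i$ (indeed $\lambda_i = 1 - \mu_i/2$), so $e^{(m)} = c_i(\omega\mu_i-1)\lambda_i^m v_i$ and therefore
\[
\L_m(\tau_i, \Phi_m(\tau_i;\Psi(\tau_i;\omega))) = c_i^2(\omega\mu_i - 1)^2\lambda_i^{2m}\norm{v_i}^2 .
\]
Since by \cref{lem:eigen} every eigenvector has the same Euclidean norm $\norm{v_i}^2 = (N+1)/2$, this common positive factor can be dropped (or absorbed into $c_i$) without changing the minimizer, and taking the expectation over $\tau\sim P$ gives precisely the objective in (\ref{eq:res_exp}).

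Next, (\ref{eq:res_exp}) is a quadratic polynomial in $\omega$ whose leading coefficient $p c_1^2\mu_1^2\lambda_1^{2m} + (1-p)c_2^2\mu_2^2\lambda_2^{2m}$ is strictly positive for $p\in(0,1)$ (with $c_i>0$ and $\lambda_i\neq 0$), so it admits a unique minimizer, which I would obtain by setting the derivative in $\omega$ to zero and solving the resulting linear equation; this yields exactly the stated $\omega_m$. For the limit I would divide numerator and denominator of $\omega_m$ by $\lambda_1^{2m}$ and use $\lvert\lambda_2/\lambda_1\rvert < 1$, so that $(\lambda_2/\lambda_1)^{2m}\to 0$ and $\omega_m \to p c_1^2\mu_1 \,/\, (p c_1^2\mu_1^2) = 1/\mu_1$, establishing (\ref{eq:res_limit}).

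The argument is elementary once the iteration has been diagonalized; the only point needing care is the strict inequality $\lvert\lambda_2\rvert < \lvert\lambda_1\rvert$, which is what makes the $\tau_1$-term dominate as $m\to\infty$. This is exactly the content of the standing hypothesis that $\tau_1$ is the harder task for the Jacobi method: via $\lambda_i = 1-\mu_i/2$ and \cref{lem:eigen}, restricting (as in \cref{app:counter-example}) to the low-frequency regime $0<\lambda_2<\lambda_1<1$, the assumption $\mu_1<\mu_2$ forces $\lambda_2<\lambda_1$, and I would make this explicit at the start of the proof so the limit computation is rigorous. Everything else is routine algebra, and I expect no further obstacle.
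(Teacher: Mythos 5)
Your proposal is correct and follows essentially the same route as the paper: diagonalize the Jacobi error propagation $e^{(m)} = M^m e^{(0)}$ along the eigenvector $v_i$, obtain the closed-form quadratic in $\omega$, minimize it, and let $m\to\infty$ using $\lambda_1 > \lambda_2$. In fact your write-up is slightly more careful than the paper's on two minor points it glosses over --- the common factor $\norm{v_i}^2 = (N+1)/2$, which must be noted to be independent of $i$ before it can be dropped, and the fact that the hypothesis $\mu_1<\mu_2$ gives $|\lambda_1|>|\lambda_2|$ only in the low-frequency regime --- so your proof is a faithful and slightly tightened version of the same argument.
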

\begin{proof}[Proof of \cref{lem:write down}]
    % By the eigenvalue decomposition, $M$ can be written as $M  = V\Lambda V^T$,
    % where $\Lambda = \text{diag}(\lambda_i)$ and $V = (v_1, v_2, \ldots, v_N)$.
    % Then,
    We have
    \begin{align}
        \E_{\tau_i \sim P}[ \L_m(\tau_i, ;\omega) ]
         & = p\L(\tau_1; \omega) + (1-p)\L(\tau_2; \omega)                                                       \\
         & = p\norm{\hat u_1^{(m)}(\omega) - u_1^*}^2 + (1-p)\norm{\hat u_2^{(m)}(\omega) - u_2^*}^2             \\
         & = p\norm{M^m (\hat u_1^{(0)}(\omega) - u_1^*)}^2 + (1-p)\norm{M^m (\hat u_2^{(0)}(\omega) - u_2^*)}^2 \\
         & = p\norm{M^m (\omega c_1\mu_1 v_1 - c_1 v_1)}^2 + (1-p)\norm{M^m (\omega c_2\mu_2 v_2 - c_2 v_2)}^2   \\
         & = p \lambda_1^{2m}c_1^2(\omega \mu_1 - 1)^2 + (1-p) \lambda_2^{2m}c_2^2(\omega \mu_2 - 1)^2.          \\
    \end{align}
    Since $\E_{\tau_i \sim P}[ \L_m(\tau_i, ;\omega) ]$ is a quadratic function of $\omega$,
    its minimum is achieved at
    \begin{equation}
        \omega_m = \frac{p c_1^2 \mu_1 \lambda_1^{2 m}+(1-p) c_2^2 \mu_2 \lambda_2^{2 m}}{p c_1^2 \mu_1^2 \lambda_1^{2 m}+(1-p) c_2^2 \mu_2^2 \lambda_2^{2 m}}.
    \end{equation}
    Since $\lambda_1 > \lambda_2$, its limit is
    \begin{equation}
        \lim_{m\to \infty}\omega_m =\frac{p c_1^2 \mu_1 }{p c_1^2 \mu_1^2 }  = \frac{1}{\mu_1}.
    \end{equation}

\end{proof}

\begin{proposition} [Minimizer of (\ref{eq:min num})]
    \label{lem:write down2}
    (\ref{eq:min num}) is written as
    % \begin{equation}
    %     \label{eq:num_exp}
    %     \begin{split}
    %         \min_\omega\ &p \max \left\{0, \frac{\log \frac{\delta}{c_1(1 - \omega \mu_1)}}{\log \lambda_1}\right\} \\
    %         &+(1-p) \max \left\{0, \frac{\log \frac{\delta}{c_2(\omega \mu_2-1)}}{\log \lambda_2}\right\}.
    %     \end{split}
    % \end{equation}
    \begin{equation}
        \label{eq:num_exp}
        \min_\omega\ p \left(\frac{\log \frac{\epsilon}{c_1|\omega \mu_1 - 1|}}{\log \lambda_1}\right)_+
        +(1-p) \left(\frac{\log \frac{\epsilon}{c_2|\omega \mu_2-1|}}{\log \lambda_2}\right)_+,
    \end{equation}
    where $(x)_+ = \max\{0, x\}$.
    Assume $\epsilon< \frac{c_1 c_2 (\mu_2 - \mu_1)}{c_1 \mu_1 + c_2 \mu_2}$.
    If $p \neq p_\epsilon$, (\ref{eq:num_exp}) has the unique minimizer
    \begin{equation}
        \omega_\epsilon =
        \begin{cases}
            \omega_{\epsilon, 1} & \text{if } p>p_\epsilon \\
            \omega_{\epsilon, 2} & \text{if } p<p_\epsilon
        \end{cases}
    \end{equation}
    where
    \begin{equation}
        \omega_{\epsilon, 1} = \frac{1}{\mu_1}-\frac{\epsilon}{c_1 \mu_1}, \quad  \omega_{\epsilon, 2} = \frac{1}{\mu_2}+\frac{\epsilon}{c_2 \mu_2}
        % \omega_{\epsilon, 1} = \frac{1-\epsilon}{\mu_1}, \quad \omega_{\epsilon, 2} = \frac{1+\epsilon}{\mu_2},
    \end{equation}
    and
    \begin{equation}
        p_\epsilon =
        \frac{\log \lambda_1 \log \frac{\epsilon}{c_1(\omega_{\epsilon, 1} \mu_2-1)}}
        {\log \lambda_1 \log \frac{\epsilon}{c_1(\omega_{\epsilon, 1} \mu_2-1)}+\log \lambda_2 \log \frac{\epsilon}{c_2(1-\omega_{\epsilon, 2} \mu_1)}}.
    \end{equation}
    If $p = p_\epsilon$, (\ref{eq:num_exp}) has two different minimizers $\omega_{\epsilon,1}$ and $\omega_{\epsilon,2}$.
    Furthermore, we have
    \begin{equation}
        \lim_{\epsilon \to 0}\omega_\epsilon =
        \begin{cases}
            \frac{1}{\mu_1} & \text{if } p > p_0 \\
            \frac{1}{\mu_2} & \text{if } p < p_0
        \end{cases},
    \end{equation}
    where
    \begin{equation}
        p_0 = \lim_{\epsilon \to 0} p_\epsilon = \frac{\log \lambda_1}{\log \lambda_1 + \log \lambda_2}.
    \end{equation}
\end{proposition}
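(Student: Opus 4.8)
The plan is to turn the minimization in \cref{eq:num_exp} into a comparison of the objective at two explicit points, and then read off the $\epsilon\to0$ behaviour. First I would make \cref{eq:num_exp} explicit: by \cref{lem:eigen}, $v_i$ is an eigenvector of the Jacobi iteration matrix $M=I-\tfrac12 A$ with eigenvalue $\lambda_i$, and $A^{-1}f_i=A^{-1}(c_i\mu_i v_i)=c_iv_i$, so $\hat u_i^{(m)}-A^{-1}f_i=M^m(\hat u_i^{(0)}-A^{-1}f_i)=\lambda_i^m c_i(\omega\mu_i-1)v_i$, whence $\sqrt{\L_m(\tau_i;\omega)}=\lambda_i^m c_i\lvert\omega\mu_i-1\rvert$. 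The relaxed $\L_\epsilon(\tau_i;\omega)$ is the least $m\ge0$ with $\lambda_i^m c_i\lvert\omega\mu_i-1\rvert\le\epsilon$; solving for $m$ and using $\log\lambda_i<0$ gives $\L_\epsilon(\tau_i;\omega)=\bigl(\log\tfrac{\epsilon}{c_i\lvert\omega\mu_i-1\rvert}\big/\log\lambda_i\bigr)_+$, and averaging over $P$ yields \cref{eq:num_exp}. Write $F(\omega)$ for the objective and $g_i(\omega)$ for its $i$-th unweighted summand. The structural facts I need about $g_i$: it vanishes exactly on $I_i:=[\tfrac1{\mu_i}-\tfrac{\epsilon}{c_i\mu_i},\ \tfrac1{\mu_i}+\tfrac{\epsilon}{c_i\mu_i}]$, is strictly decreasing on $(-\infty,\min I_i]$, strictly increasing on $[\max I_i,\infty)$, and on each of these two arms is \emph{concave}, since there it equals $\tfrac1{\lvert\log\lambda_i\rvert}$ times the logarithm of a positive affine function of $\omega$ minus a constant. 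A direct rearrangement shows the hypothesis $\epsilon<\tfrac{c_1c_2(\mu_2-\mu_1)}{c_1\mu_1+c_2\mu_2}$ is \emph{equivalent} to $\max I_2<\min I_1$, i.e. $I_1,I_2$ are disjoint with $I_2$ to the left; in particular $g_1(\omega_{\epsilon,2})>0$ and $g_2(\omega_{\epsilon,1})>0$, where $\omega_{\epsilon,1}:=\min I_1=\tfrac1{\mu_1}-\tfrac{\epsilon}{c_1\mu_1}$ and $\omega_{\epsilon,2}:=\max I_2=\tfrac1{\mu_2}+\tfrac{\epsilon}{c_2\mu_2}$.

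Next I would localize the minimizer by splitting $\R$ into $(-\infty,\omega_{\epsilon,2}]$, $[\omega_{\epsilon,2},\omega_{\epsilon,1}]$, and $[\omega_{\epsilon,1},\infty)$. On the first interval $g_1$ is strictly decreasing throughout and $g_2$ is non-increasing, so $F$ is strictly decreasing, with infimum $F(\omega_{\epsilon,2})=p\,g_1(\omega_{\epsilon,2})$ attained only at $\omega_{\epsilon,2}$; symmetrically $F$ is strictly increasing on the third interval with minimum $F(\omega_{\epsilon,1})=(1-p)\,g_2(\omega_{\epsilon,1})$ attained only at $\omega_{\epsilon,1}$. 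On the middle interval $g_1$ lies on its left arm and $g_2$ on its right arm, so $F$ is concave there and hence attains its minimum at an endpoint (and is strictly positive in the interior). Therefore the global minimum of $F$ is $\min\{p\,g_1(\omega_{\epsilon,2}),\,(1-p)\,g_2(\omega_{\epsilon,1})\}$, attained only at $\omega_{\epsilon,1}$ and/or $\omega_{\epsilon,2}$.

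Since both candidate values are positive, $p\,g_1(\omega_{\epsilon,2})<(1-p)\,g_2(\omega_{\epsilon,1})\iff p<p_\epsilon$ with $p_\epsilon:=g_2(\omega_{\epsilon,1})\big/\bigl(g_1(\omega_{\epsilon,2})+g_2(\omega_{\epsilon,1})\bigr)\in(0,1)$; hence the unique minimizer is $\omega_{\epsilon,2}$ for $p<p_\epsilon$ and $\omega_{\epsilon,1}$ for $p>p_\epsilon$, with both minimizing when $p=p_\epsilon$. Substituting $g_1(\omega_{\epsilon,2})=\log\tfrac{\epsilon}{c_1(1-\omega_{\epsilon,2}\mu_1)}\big/\log\lambda_1$ and $g_2(\omega_{\epsilon,1})=\log\tfrac{\epsilon}{c_2(\omega_{\epsilon,1}\mu_2-1)}\big/\log\lambda_2$ (both denominators positive by disjointness) and clearing the $\log\lambda_i$ factors gives the stated formula for $p_\epsilon$. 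For the limit: $\omega_{\epsilon,1}\to1/\mu_1$ and $\omega_{\epsilon,2}\to1/\mu_2$ immediately, while $1-\omega_{\epsilon,2}\mu_1\to\tfrac{\mu_2-\mu_1}{\mu_2}$ and $\omega_{\epsilon,1}\mu_2-1\to\tfrac{\mu_2-\mu_1}{\mu_1}$ tend to positive constants, so each $\log(\epsilon/\cdot)=\log\epsilon+O(1)$ and $p_\epsilon\to\tfrac{\log\lambda_1}{\log\lambda_1+\log\lambda_2}=:p_0\in(0,1)$; for fixed $p\ne p_0$, continuity of $\epsilon\mapsto p_\epsilon$ keeps $p$ on the same side of $p_\epsilon$ as of $p_0$ for all small $\epsilon$, giving $\omega_\epsilon\to1/\mu_1$ if $p>p_0$ and $\omega_\epsilon\to1/\mu_2$ if $p<p_0$.

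I expect the main obstacle to be the localization step: $F$ is neither convex nor differentiable (it has kinks at the four endpoints of $I_1,I_2$), so first-order conditions are unavailable, and the reduction to comparing only two boundary values must be argued from the strict monotonicity of the arms outside the "active" region together with concavity of the arms — the latter being exactly what forces $F$ restricted to the gap between $I_2$ and $I_1$ to be concave and hence endpoint-minimized. The remaining work, namely verifying that the hypothesis is precisely the disjointness condition and rearranging $p_\epsilon$ into the displayed quotient, is routine bookkeeping.
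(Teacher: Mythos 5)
Your proposal is correct and follows essentially the same route as the paper: write each task's relaxed iteration count as $\bigl(\log\frac{\epsilon}{c_i|\omega\mu_i-1|}/\log\lambda_i\bigr)_+$, use strict monotonicity of the objective outside the two zero-intervals and concavity on the gap between them to reduce the minimization to comparing the values at $\omega_{\epsilon,1}$ and $\omega_{\epsilon,2}$, which defines $p_\epsilon$, and then send $\epsilon\to 0$. One small remark: your (correct) bookkeeping yields $p_\epsilon=\frac{\log\lambda_1\log\frac{\epsilon}{c_2(\omega_{\epsilon,1}\mu_2-1)}}{\log\lambda_1\log\frac{\epsilon}{c_2(\omega_{\epsilon,1}\mu_2-1)}+\log\lambda_2\log\frac{\epsilon}{c_1(1-\omega_{\epsilon,2}\mu_1)}}$, i.e.\ the constants $c_1,c_2$ in the proposition's displayed formula for $p_\epsilon$ appear swapped (a typo consistent with the paper's own derivation), which affects neither the case distinction nor the limit $p_0$.
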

\begin{proof}[Proof of \cref{lem:write down2}]
    We consider the relaxed version $\L_\epsilon(\tau; \omega) = \min \{ m \in \R_{\geq 0}: \L_m(\tau; \omega) \leq \epsilon^2\}$.
    By solving $\L_n(\tau_i; \omega) = \epsilon^2$ for $n$, we have
    \begin{equation}
        \L_\epsilon(\tau_i; \omega) = \max\left\{0, \frac{\log \frac{\epsilon}{c_i|\omega \mu_i - 1|}}{\log \lambda_i}\right\},
    \end{equation}
    which deduces
    \begin{equation}
        \E_{\tau_i \sim P} [\L_\epsilon(\tau_i; \omega)]
        = p \max\left\{0, \frac{\log \frac{\epsilon}{c_1|\omega \mu_1 - 1|}}{\log \lambda_1}\right\}
        +(1-p) \max\left\{0, \frac{\log \frac{\epsilon}{c_2|\omega \mu_2 - 1|}}{\log \lambda_2}\right\}.
    \end{equation}
    Assuming $\epsilon< \frac{c_1 c_2 (\mu_2 - \mu_1)}{c_1 \mu_1 + c_2 \mu_2}$, this can be written as
    \begin{equation}
        \E_{\tau_i \sim P} [\L_\epsilon(\tau_i; \omega)] =
        \begin{cases}
            p \frac{\log \frac{\epsilon}{c_1(1 - \omega \mu_1)}}{\log \lambda_1}
             & \text{ if } \omega \in\left[\frac{1}{\mu_2}-\frac{\epsilon}{c_2 \mu_2}, \frac{1}{\mu_2}+\frac{\epsilon}{c_2 \mu_2}\right] \\
            (1-p) \frac{\log \frac{\epsilon}{c_2(\omega \mu_2 - 1)}}{\log \lambda_2}
             & \text{ if } \omega \in\left[\frac{1}{\mu_1}-\frac{\epsilon}{c_1 \mu_1}, \frac{1}{\mu_1}+\frac{\epsilon}{c_1 \mu_1}\right] \\
            p \frac{\log \frac{\epsilon}{c_1|\omega \mu_1 - 1|}}{\log \lambda_1}
            + (1-p) \frac{\log \frac{\epsilon}{c_2|\omega \mu_2 - 1|}}{\log \lambda_2}
             & \text{ otherwise. }
        \end{cases}
    \end{equation}
    Note that $\E_{\tau_i \sim P} [\L_\epsilon(\tau_i; \omega)]$ is strictly decreasing for $\omega \in (-\infty, \frac{1}{\mu_2}+\frac{\epsilon}{c_2 \mu_2}]$
        and strictly increasing for $\omega \in [\frac{1}{\mu_1}-\frac{\epsilon}{c_1 \mu_1}, \infty)$.
    Since $\E_{\tau_i \sim P} [\L_\epsilon(\tau_i; \omega)]$ is concave for $\omega \in [\frac{1}{\mu_2}+\frac{\epsilon}{c_2 \mu_2}, \frac{1}{\mu_1}-\frac{\epsilon}{c_1 \mu_1}]$,
    its minimum is attained at $\omega_{\epsilon, 1} = \frac{1}{\mu_1}-\frac{\epsilon}{c \mu_1}$ or $\omega_{\epsilon, 2} = \frac{1}{\mu_2}+\frac{\epsilon}{c_2 \mu_2}$.
    Then, by comparing $\E_{\tau_i \sim P} [\L_\epsilon(\tau_i; \omega_{\epsilon, 2})] = p \L_\epsilon (\tau_1; \omega_{\epsilon, 2})$
    and $\E_{\tau_i \sim P} [\L_\epsilon(\tau_i; \omega_{\epsilon, 1})] = (1-p) \L_\epsilon (\tau_2; \omega_{\epsilon, 1})$,
    we can deduce the result.
\end{proof}

Let us prove \cref{thm:guarantee2} first before \cref{thm:opposite}.
\begin{proof}[Proof of \cref{thm:guarantee2}]
    The limits are shown in \cref{lem:write down2}.

    We now show the second part.
    Note that if $\delta_1 > \delta_2 \geq \epsilon > 0$, then $p_0 < p_\epsilon \leq p_{\delta_2} < p_{\delta_1}$.
    Let $\omega_{\mathrm{max}} = \argmax_{\omega \in [1/\omega_{\epsilon, 2}, 1/\omega_{\epsilon, 1}]} \E_{\tau_i \sim P} [\L_\epsilon(\tau_i; \omega)]$.

    If $p < p_0$, then we have
    $\omega_\epsilon = \omega_{\epsilon, 2} \leq \omega_{\delta_2} = \omega_{\delta_2, 2} < \omega_{\delta_1} = \omega_{\delta_1, 2}$.
    Note that $p < p_0$ and $\delta_1 < \frac{c_1 c_2 (\mu_2 - \mu_1)}{2(c_1 \mu_1 + c_2 \mu_2)}$ guarantee $\omega_{\delta_1} < \omega_{\mathrm{max}}$.
    Since $\E_{\tau_i \sim P} [\L_\epsilon(\tau_i; \omega)]$ is increasing for $\omega \in [\omega_{\epsilon}, \omega_{\mathrm{max}}]$
    and $\omega_\epsilon \leq \omega_{\delta_2} < \omega_{\delta_1} < \omega_\mathrm{max}$,
    we have $\E_{\tau \sim P}[ L_\epsilon(\tau; \omega_{\delta_1}) ] > \E_{\tau \sim P}[ L_\epsilon(\tau; \omega_{\delta_2}) ]$.

    If $p > p_{\delta_1}$, then we have
    $\omega_\epsilon = \omega_{\epsilon, 1} \geq \omega_{\delta_2} = \omega_{\delta_2, 1} > \omega_{\delta_1} = \omega_{\delta_1, 1}$.
    Note that $p > p_0$ and $\delta_1 < \frac{c_1 c_2 (\mu_2 - \mu_1)}{2(c_1 \mu_1 + c_2 \mu_2)}$ guarantee $\omega_{\delta_1} > \omega_{\mathrm{max}}$.
    Since $\E_{\tau_i \sim P} [\L_\epsilon(\tau_i; \omega)]$ is decreasing for $\omega \in [\omega_{\mathrm{max}}, \omega_\epsilon]$
    and $\omega_\epsilon \geq \omega_{\delta_2} > \omega_{\delta_1} > \omega_\mathrm{max}$,
    we have $\E_{\tau \sim P}[ L_\epsilon(\tau; \omega_{\delta_1}) ] > \E_{\tau \sim P}[ L_\epsilon(\tau; \omega_{\delta_2}) ]$.

    We now show the last part.
    Recall that $\omega_m = \frac{p c_1^2 \mu_1 \lambda_1^{2 m}+(1-p) c_2^2 \mu_2 \lambda_2^{2 m}}{p c_1^2 \mu_1^2 \lambda_1^{2 m}+(1-p) c_2^2 \mu_2^2 \lambda_2^{2 m}}$.
    Setting $m = 0$, we have $\omega_0 = \frac{p c_1^2 \mu_1 +(1-p) c_2^2 \mu_2 }{p c_1^2 \mu_1^2 +(1-p) c_2^2 \mu_2^2 }$.
    For $\delta > 0$, we take $c_1$, $c_2$, and $p$ so that $\omega_\epsilon \leq \omega_\delta < \omega_\mathrm{max}<\omega_0 < \omega_m$
    and $\E_{\tau_i \sim P} [\L_\epsilon(\tau_i; \omega_\delta)]$.
    For example, $c_1 = \frac{\mu_2 (\log \lambda_1 + \log \lambda_2)}{p^2(\mu_2 - \mu_1) \log \lambda_2} \delta$,
    $c_2 = \frac{\mu_1(\log \lambda_1 + \log \lambda_2)}{p(\mu_2 - \mu_1) \log \lambda_1} \delta$, and
    $p<p_0$ satisfy the relationship.
    Then, we have
    \begin{align}
        \frac{\E_{\tau_i \sim P} [\L_\epsilon(\tau_i; \omega_m)]}{\E_{\tau_i \sim P} [\L_\epsilon(\tau_i; \omega_\delta)]}
         & = \frac{p \L_\epsilon (\tau_1; \omega_m) + (1-p) \L_\epsilon (\tau_2; \omega_m)} {p \L_\epsilon (\tau_1; \omega_\delta) + (1-p) \L_\epsilon (\tau_2; \omega_\delta)} \\
         & \geq \frac{(1-p) \L_\epsilon (\tau_2; \omega_m)} {p \L_\epsilon (\tau_1; \omega_\delta) + (1-p) \L_\epsilon (\tau_2; \omega_\delta)}                                 \\
         & \geq \frac{(1-p) \L_\epsilon (\tau_2; \omega_0)} {p \L_\epsilon (\tau_1; \omega_\delta) + (1-p) \L_\epsilon (\tau_2; \omega_\delta)}                                 \\
         & =  \frac{(1-p) \log \lambda_1 \log\frac{\epsilon}{c_2(\omega_0\mu_2 - 1)}}
        {p \log \lambda_2 \log\frac{\epsilon}{c_1(1 - \omega_\delta\mu_1)} + (1-p) \log \lambda_1 \log\frac{\epsilon}{c_2(\omega_\delta\mu_2 - 1)}}                             \\
    \end{align}
    Substituting $\omega_0$, $\omega_\delta$, $c_1$, and $c_2$ and taking the limit as $p \to 0$,
    we have $\frac{\E_{\tau_i \sim P} [\L_\epsilon(\tau_i; \omega_m)]}{\E_{\tau_i \sim P} [\L_\epsilon(\tau_i; \omega_\delta)]} \to \infty$.

\end{proof}

% \cref{thm:opposite} is a corollary of \cref{thm:guarantee2}.
\begin{proof}[Proof of \cref{thm:opposite}]
    The limits are shown in \cref{lem:write down}.

    We now show the second part.
    Since $\omega_m$ is increasing in $m$ and $\lim_{m \to \infty} \omega_m = 1/\mu_1$,
    there exists $m_0$ such that $\omega_{\epsilon, 1} < \omega_{m_0}$.
    For any $m_1$ and $m_2$, if $m_0 < m_1 < m_2$, then $ \omega_{m_0} < \omega_{m_1}< \omega_{m_2}$.
    Hence, $\E_{\tau_i \sim P} [\L_\epsilon(\tau_i; \omega_{m_1})] < \E_{\tau_i \sim P} [\L_\epsilon(\tau_i; \omega_{m_2})]$
    because $\E_{\tau_i \sim P} [\L_\epsilon(\tau_i; \omega)]$ is increasing for $\omega \in [\omega_{\epsilon, 1}, 1/\mu_1]$.

    For the last part, the proof is similar to \cref{thm:guarantee2}.
    Take $c_1 = \frac{\mu_2 (\log \lambda_1 + \log \lambda_2)}{p^2(\mu_2 - \mu_1) \log \lambda_2} \epsilon$,
    $c_2 = \frac{\mu_1(\log \lambda_1 + \log \lambda_2)}{p(\mu_2 - \mu_1) \log \lambda_1} \epsilon$, and
    $p<p_0$ and substitute them into $\E_{\tau_i \sim P} [\L_\epsilon(\tau_i; \omega_m)]$.
    Then, we have $\E_{\tau_i \sim P} [\L_\epsilon(\tau_i; \omega_m)] \to \infty$ as $p \to 0$.

    % The number of iterations $\L_\epsilon$ is written as $\L_\epsilon(\tau; \omega) = \min \{ m \in \Z_{\geq 0}: \L_m(\tau; \omega) \leq \epsilon^2 \}$.
    % We consider its relaxed version $\L'_\epsilon(\tau; \omega) = \min \{ m \in \R_{\geq 0}: \L_m(\tau; \omega) \leq \epsilon^2\}$.
    % Note that $\L'_\epsilon(\tau; \omega) \leq \L_\epsilon(\tau; \omega) < \L'_\epsilon(\tau; \omega) + 1$.
    % By solving $\L_n(\tau_i; \omega) = \epsilon^2$ for $n$, we have
    % \begin{equation}
    %     \L'_\epsilon(\tau_i; \omega) = \max\left\{0, \frac{\log \frac{\epsilon}{c_i|1 - \omega \mu_i|}}{\log \lambda_i}\right\}.
    % \end{equation}
    % Then, we have

\end{proof}

% \begin{proof}[Proof of \cref{thm:guarantee}]
%     Since $\omega_m$ is strictly increasing in $m$, we have $\omega_{m_1} < \omega_{m_2} \leq \omega_n$ if $m_1 < m_2 \leq n$.
%     Since $\E_{\tau_i \sim P}[ \L_m(\tau_i, ;\omega) ]$ is quadratic in $\omega$ and takes its minimum at $\omega_n$, it is strictly decreasing in $\omega \in (-\infty, \omega_n]$,
%     so $\E_{\tau \sim P}[ L_n(\tau; \omega_{m_1}) ] > \E_{\tau \sim P}[ L_n(\tau; \omega_{m_2}) ]$.

% \end{proof}

\section{Details of numerical examples}
\label{app:numerical}
\subsection{Details in \texorpdfstring{\cref{sec:counter-example}}{} and \texorpdfstring{\cref{sec:counter-example2}}{}}
\label{app:counter-example}
\paragraph{Task}
In distribution $P_1$, $f_\tau$ is represented by
\begin{equation}
    f_\tau = \sum_{i=1}^{N}c_i \mu_i v_i, \text{ where } c_i \sim \mathcal{N}(0, \left|\frac{N+1-2i}{N-1}\right|).
\end{equation}
In distribution $P_2$, $f_\tau$ is represented by
\begin{equation}
    f_\tau = \sum_{i=1}^{N}c_i \mu_i v_i, \text{ where } c_i \sim \mathcal{N}(0, 1 - \left|\frac{N+1-2i}{N-1}\right|).
\end{equation}
The discretization size is $N=16$ in the experiments.
The number of tasks for training, validation, and test are all $10^3$.

\paragraph{Network architecture and hyper-parameters}
In \cref{sec:counter-example} and \cref{sec:counter-example2}, $\Psi_{\mathrm{nn}}$
is a fully connected neural network with two hidden layers of $15$ neurons.
Its input is discretized $f_\tau \in \R^N$ and the output is $c_i$'s of $\hat u^{(0)}= \sum_{i=1}^N c_i v_i$.
The activation function is $\mathrm{SiLU}$ \cite{Elfwing2018-ae} for hidden layers.
The optimizer is Adam \cite{Kingma2015-ys} with learning rate $0.01$ and $(\beta_1, \beta_2) = (0.999, 0.999)$.
The batch size is $256$.
The model is trained for $2500$ epochs.
During training, if the validation loss does not decrease for $100$ epochs, the learning rate is reduced by a factor of $1/5$.
The presented results are obtained by the best models selected based on the validation loss.

% In \cref{sec:neumerical analysis}, $\Psi_{\mathrm{nn}}$ is a variant of 1D U-Net
% with a residual connection to leverage the heuristic initial guess $u_h = \Psi_{\mathrm{base}}(\tau)$,
% so $\Psi_{\mathrm{nn}}(\tau) = \Psi_{\mathrm{nn}}(f_\tau, u_h)= u_h + \mathrm{1DUNet}(f_\tau, u_h)$,
% where $\mathrm{1DUNet}$ consists of four stages with halved resolutions.
% Each stage has two convolutional layers with kernel size $11$ and the activation function $\mathrm{tanh}$.
% In the first stage, the number of channels is $8$, and it is doubled as the resolution is halved.
% % The kernel size is $11$, and $\mathrm{tanh}$ is used as the activation function.
% By utilizing the linearity of the Poisson equation, inputs are normalized by $\norm{f_\tau}$ before feeding them into $\Psi_{\mathrm{nn}}$, and the final output $\hat u$ is denormalized by $\norm{f_\tau}$.
% The model is trained for 2000 epochs by Adam with the learning rate $0.0005$ and the batch size $512$.
% The learning rate is decreased by $1/5$ when the validation loss does not improve for $200$ epochs,
% and the training is terminated when the validation loss does not improve for $250$ epochs.
% The model with the best validation loss is used for evaluation.
% For each solver $\Phi$, $\Psi_{\mathrm{nn}}$ is trained five times with different random seeds, and their average is presented in Table \cref{tab:poisson}.

\paragraph{Results}
The detailed results of \cref{fig:performance} are presented in \cref{tab:poisson}.

\begin{table}[h]
    {
        \footnotesize
        \caption{The average number of iterations for solving Poisson equations.
            Boldface indicates best performance for each column.
            % Standard deviations are presented in \cref{sup tab:initial toy standard}.
        }
        \label{tab:poisson}
        \begin{center}
            \begin{tabular}{lll|rrrr|rrrr}
                                     &             &                    & $p=0$             &               &                &                & $p=0.01$                                                         \\
                \hline
                                     &             &                    & $\epsilon$ (test) &               &                &                &               &                                                  \\
                $\Psi$               & $m$ (train) & $\epsilon$ (train) & $10^{-2}$         & $10^{-4}$     & $10^{-6}$      & $10^{-8}$      & $10^{-2}$     & $10^{-4}$      & $10^{-6}$      & $10^{-8}$      \\
                \hline
                $\Psi_{\text{base}}$ & -           & -                  & 28.17             & 92.54         & 158.41         & 224.28         & 30.15         & 96.52          & 164.41         & 232.30         \\
                \hline
                $\Psi_{\text{nn}}$   & 0           & -                  & \textbf{0.00}     & 17.52         & 168.80         & 436.95         & 2.11          & 183.28         & 451.24         & 719.40         \\
                                     & 1           & -                  & \textbf{0.00}     & 20.07         & 148.26         & 403.22         & 1.90          & 178.61         & 446.51         & 714.67         \\
                                     & 5           & -                  & 1.00              & 13.82         & 135.43         & 397.97         & 1.18          & 34.47          & 286.43         & 554.55         \\
                                     & 25          & -                  & 6.26              & 16.31         & 37.71          & 172.02         & 7.11          & 20.22          & 121.24         & 379.51         \\
                                     & 125         & -                  & 23.42             & 83.28         & 148.73         & 214.59         & 20.96         & 80.44          & 151.39         & 353.35         \\
                \hline
                                     & -           & $10^{-2}$          & 0.01              & 40.31         & 223.15         & 491.30         & \textbf{0.25} & 185.93         & 453.84         & 721.99         \\
                                     & -           & $10^{-4}$          & 3.93              & \textbf{9.76} & 48.61          & 178.15         & 5.34          & \textbf{13.69} & 176.77         & 444.71         \\
                                     & -           & $10^{-6}$          & 6.20              & 15.79         & \textbf{32.00} & \textbf{83.43} & 7.94          & 19.79          & \textbf{36.11} & 166.30         \\
                                     & -           & $10^{-8}$          & 11.12             & 32.05         & 57.62          & 86.70          & 13.39         & 37.44          & 66.16          & \textbf{97.97} \\
            \end{tabular}
        \end{center}
    }
\end{table}

\subsection{Details in \texorpdfstring{\cref{sec:relaxation factors}}{}}
\label{app:relax toy}

\paragraph{Task}
We prepare $10,000$ sets of $c_1,c_2,c_3$ and use $2,500$ for training, $2,500$ for validation, and $5,000$ for test.
Since the solution of the Robertson equation has a quick initial transient followed by a smooth variation \cite{Hairer2010-uw},
we set the step size $h_n \ (n=1, 2, \ldots, 100)$ so that the evaluation points are located log-uniformly in $[10^{-6}, 10^3]$.
Thus, each set of $c_1,c_2,c_3$ is associated with $100$ data points.

\paragraph{Solver}
The iteration rule of the Newton-SOR method is
\begin{align}
    J_n(y_{n+1}^{(k)}) & = D_k - L_k - U_k                                          \\
    \label{eq:Newton-SOR}
    y_{n+1}^{(k+1)}    & = y_{n+1}^{(k)} - r (D_k - r L_k)^{-1} g_n(y_{n+1}^{(k)}),
\end{align}
where $J_n$ is the Jacobian of $g_n$, its decomposition $D_k$, $L_k$, $U_k$ are diagonal, strictly lower triangular, and strictly upper triangular matrices respectively,
and $r \in [1, 2]$ is the relaxation factor of the SOR method.
It iterates until the residual of the approximate solution $\norm{g_n(y_{n+1}^{(k)})}$ reaches a given error tolerance $\epsilon$.
Note that we choose to investigate the Newton-SOR method, because it is a fast and scalable method and applicable to larger problems.

\paragraph{Network architecture and hyper-parameters}
In \cref{sec:relaxation factors}, $\Psi_{\text{ini}}$, $\Psi_{\text{relax}}$, and $\Psi_{\text{both}}$ are fully-connected neural networks
that take $c_1, c_2, c_3, h_n, y_n$ as an input.
They have two hidden layers with 1024 neurons, and ReLU is used as the activation function except for the last layer.
The difference among them is only the last layer.
The last layer of $\Psi_{\text{ini}}$ modifies the previous timestep solution $y_n$ for a better initial guess $y_\tau \in \R^3$.
Since the scale of each element of $y_n$ is quite different, the modification is conducted in log scale, i.e.
\begin{equation}
    y_\tau = \exp (\log (y_n + \mathrm{tanh}(W_{\text{ini}}x))),
\end{equation}
where $x$ is the input of the last layer and $W_{\text{ini}}$ are its weights.
The last layer of $\Psi_{\text{relax}}$ is designed to output the relaxation factor $r_\tau \in [1, 2]$:
\begin{equation}
    r_\tau = \mathrm{sigmoid}(W_{\text{relax}}x) + 1,
\end{equation}
where $x$ is the input of the last layer and $W_{\text{relax}}$ are its weights.
The last layer of $\Psi_{\text{both}}$ is their combination.

The meta-solvers are trained for 200 epochs by Adam with batchsize $16384$.
The initial learning rate is $2.0 \cdot 10^{-5}$, and it is reduced at the $100$th epoch and $150$th epoch by $1/5$.
For $\Psi_{\text{relax}}$ and $\Psi_{\text{both}}$, the bias in the last layer corresponding to the relaxation factor is set to $-1$ at the initialization
for preventing unstable behavior in the beginning of the training.

\subsection{Details in \texorpdfstring{\cref{sec:preconditioning}}{}}

\paragraph{Task}
To set up problem instances, open-source computing library FEniCS \cite{Alnaes2015-nz} is used.
We sampled tasks $1,000$ for training, $1,000$ for validation, and $1,000$ for test.

\paragraph{Solver}
Our implementation of the ICCG method is based on \cite{Huang2009-jv}.

\paragraph{Network architecture and hyper-parameters}
In \cref{sec:preconditioning}, $\Psi_{\text{nn}}$ is a fully-connected neural network with two hidden layers of $128$ neurons.
Its input is $\{W_\tau, \lambda_\tau, \mu_\tau, \rho_\tau, \norm{A_\tau}_1, \norm{A_\tau}_\infty, \norm{A_\tau}_F \}$
and output is diagonal shift $\alpha_\tau$.
The activation function is ReLU except for the last layer.
At the last layer, the sigmoid function is used as the activation function.
The hyper-parameter $\gamma$ in the loss functions is $\gamma=500$.

The meta-solver is trained for $200$ epochs by Adam with batchsize $128$.
The initial learning rate is $0.001$ and it is reduced at the $100$th epoch and $150$th epoch by $1/5$.
The presented results are obtained by the best models selected based on the validation loss.

\label{app:preconditioning}

% \paragraph{Results}
% The detailed results of \cref{tab:relax toy} are presented in \cref{sup tab:relax toy standard}.

% \section{Source code}
% The snapshot of the source code for this paper is found at \url{https://github.com/arisakaso/GBMS/tree/jcomp},
% and will be updated at \url{https://github.com/arisakaso/GBMS}.
%%%%%%%%%%%%%%%%%%%%%%%%%%%%%%%%%%%%%%%%%%%%%%%%%%%%%%%%%%%%%%%%%%%%%%%%%%%%%%%
%%%%%%%%%%%%%%%%%%%%%%%%%%%%%%%%%%%%%%%%%%%%%%%%%%%%%%%%%%%%%%%%%%%%%%%%%%%%%%%

\end{document}